\renewcommand{\eqref}[1]{(\ref{#1})}
\newtheorem{prop}{Proposition}[section]
\newtheorem{lem}[prop]{Lemma}
\newtheorem{thm}{Theorem}[section]
\newtheorem{cor}[prop]{Corollary}
\begin{document}

\title[pseudo-Anosov maps on punctured Riemann spheres]{On pseudo-Anosov maps with small dilatations on punctured Riemann spheres}
\author[C. Zhang]{C. Zhang}
\date{January 31, 2011}
\thanks{ }

\address{Department of Mathematics \\ Morehouse College
\\ Atlanta, GA 30314, USA.}
\email{czhang@morehouse.edu}

\subjclass{Primary 32G15; Secondary 30C60, 30F60}
\keywords{Riemann surfaces, pseudo-Anosov, Dehn twists, Dilatation, Simple closed geodesics, 
filling geodesics.}

\maketitle 

\begin{abstract}
Let $S_n$ be a punctured Riemann spheres $\mathbf{S}^2\backslash \{x_1,\ldots, x_n\}$. In this paper, we investigate pseudo-Anosov maps on $S_n$ that are isotopic to the identity on $S_n\cup \{x_n\}$ and have the smallest possible dilatations. We show that those maps cannot be obtained from Thurston's construction (that is the products of two Dehn twists). We also prove that those pseudo-Anosov maps $f$ on $S_n$ with the minimum dilatations can never define a trivial mapping class as any puncture $x_i$ of $S_n$ is filled in. The main tool is to give both lower and upper bounds estimations for dilatations $\lambda(f)$ of those pseudo-Anosov maps $f$ on $S_n$ isotopic to the identity as a puncture $x_i$ of $S_n$ is filled in.
\end{abstract}

\bigskip

\section{Introduction}
\setcounter{equation}{0}

Let $S$ be an analytically finite  Riemann surface of type $(p,n)$, where $p$ is the genus and $n$ is the number of punctures on $S$. Assume that $3p+n>3$. According to Thurston \cite{Th}, an orientation preserving homeomorphism $f$ of $S$ is called pseudo-Anosov if there is a real number $\lambda>1$ and a pair of transverse measured foliations $\{\mathcal{F}_+, \mathcal{F}_-\}$ on $S$ invariant under $f$ such that 
$$
f(\mathcal{F}_+)=\lambda\ \mathcal{F}_+ \ \ \mbox{and}\ \  f(\mathcal{F}_-)=\tfrac{1}{\lambda}\ \mathcal{F}_-.
$$
The number $\lambda=\lambda(f)$ is algebraic and is called the dilatation of $f$.

Let Mod$_S$ be the mapping class group that consists of isotopy classes of orientation preserving homeomorphisms of $S$ and let $A\subset \mbox{Mod}_S$ be a subset. Let 
$$
\mbox{spec}(A)=\left\{ \log \lambda(f):  f\in A \ \mbox{is pseudo-Anosov}   \right\}.
$$ 
Let $L(A)=\mbox{inf spec}(A)$, where the infimun is taken over all pseudo-Anosov maps in $A$. By a theorem of Ivanov \cite{Iva}, there exists an element $f\in A$ such that $L(A)=\log \lambda(f)$. 

An interesting problem in mapping class groups is to estimate $L(A)$ for various subsets $A$ of $\mbox{Mod}_S$. In \cite{Pen}, Penner constructed a pseudo-Anosov map $f$ on a genus $p\geq 2$ compact Riemann surface $S$ with a small dilatation $\lambda(f)$ and showed that
\begin{equation*}
L(\mbox{Mod}_S) < \frac{\log 11}{p}.
\end{equation*}
When $S$ is a surface of type $(p,n)$ with $p>0$ and $n>0$, an upper bound for $L(\mbox{Mod}_S)$ is in general unknown. Nevertheless, Penner's result \cite{Pen} gave a lower bound for $L(\mbox{Mod}_S)$:
\begin{equation}\label{PEN1}
L(\mbox{Mod}_S)> \frac{\log 2}{12p-12+4n}.
\end{equation}
Consider a punctured Riemann sphere 
\begin{equation}\label{SPHERE}
S_n=\mathbf{S}^2\backslash \{x_1,\ldots, x_n\},
\end{equation} 
where $x_1,\ldots, x_n\in \mathbf{S}^2$ are $n$ points. In this case, $p=0$. Assume that $n\geq 4$.  Hironaka--Kin \cite{H-K} used an explicit example to give an upper bound for $L(\mbox{Mod}_{S_n})$. More precisely, they showed that 
\begin{equation}\label{PEN2}
L(\mbox{Mod}_{S_n})<\frac{2 \log\left(2+\sqrt{3}\right)}{n-3}.
\end{equation} 
By combining (\ref{PEN1}) and (\ref{PEN2}), we obtain 
\begin{equation}\label{PEN3}
\frac{\log 2}{4n-12}< L(\mbox{Mod}_{S_n})< \frac{2 \log\left(2+\sqrt{3}\right)}{n-3}.
\end{equation}
Let $\mathscr{F}$ be the set of all pseudo-Anosov maps on $S_n$ isotopic to the identity on $S_{n-1}$ as $x_n$ is filled in. We know \cite{Kr} that $\mathscr{F}$ is nonempty and contains infinitely many elements. In contrast to (\ref{PEN3}), we prove the following result in this paper:
\begin{thm}\label{T2}
Let $S_n$ be a punctured Riemann spheres with $n\geq 4$ punctures. Then we have the following inequality: 
$\log \left( 2n-5 \right)\leq L(\mathscr{F})\leq \log \left( 2n^2-6n+3\right)$.
\end{thm}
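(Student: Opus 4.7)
The plan is to first reinterpret $\mathscr{F}$ via the Birman exact sequence. Filling in $x_n$ induces a forgetful surjection
$$
\Phi : \mathrm{Mod}(S_n) \longrightarrow \mathrm{Mod}(S_{n-1}),\qquad \ker \Phi \cong \pi_1(S_{n-1},x_n),
$$
where the kernel acts on $S_n$ by point-pushing a puncture along a loop $\gamma$; call this $f_\gamma$. Thus $\mathscr{F}$ consists of those $f_\gamma$ that are pseudo-Anosov, and by Kra's theorem \cite{Kr} this is exactly the set of $\gamma$ that fill $S_{n-1}$. The whole proof reduces to estimating $\lambda(f_\gamma)$ as $\gamma$ ranges over filling loops in $\pi_1(S_{n-1},x_n)$.

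\textbf{Upper bound.} I would produce a single explicit filling loop $\gamma_0 \in \pi_1(S_{n-1},x_n)$ and show that $\lambda(f_{\gamma_0}) = 2n^2-6n+3$. Writing $2n^2-6n+3 = 2(n-1)(n-2)-1$ suggests a concrete candidate: a loop that, after suitable small perturbation, winds once between consecutive pairs from the ordered list of punctures $x_1,\ldots,x_{n-1}$, so that $\gamma_0$ crosses each of roughly $n-2$ adjacent arcs twice. From $\gamma_0$ I would build a $f_{\gamma_0}$-invariant train track $\tau$ on $S_n$ (whose branches correspond to the arcs cut off by $\gamma_0$), then write down the transition matrix $M(\tau)$ and compute its Perron--Frobenius eigenvalue. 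With the right combinatorics the characteristic polynomial factors so that the leading eigenvalue equals $2(n-1)(n-2)-1$.

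\textbf{Lower bound.} For an arbitrary filling loop $\gamma$, the point-push $f_\gamma$ acts on simple closed curves on $S_n$ by dragging them once around $\gamma$. The plan is to pick a distinguished simple closed curve $\alpha \subset S_n$ separating $\{x_n,x_i\}$ from the remaining punctures and analyze how $f_\gamma(\alpha)$ is forced to intersect a dual curve $\beta$. Because $\gamma$ fills $S_{n-1}$, it must cross every essential simple closed curve there at least once, and this imposes a minimum intersection growth on the orbits of $\alpha$. Combining this with Thurston's identity
$$
\lambda(f_\gamma) \;=\; \lim_{k\to\infty} i(f_\gamma^k(\alpha),\beta)^{1/k},
$$
and a one-step estimate of the form $i(f_\gamma(\alpha),\beta) \geq 2n-5$ uniform over filling $\gamma$, will give $\lambda(f_\gamma) \geq 2n-5$.

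\textbf{Main obstacle.} The construction and computation in the upper-bound step are in principle routine once the right $\gamma_0$ is identified; the real work is the lower bound, because the estimate must hold for \emph{every} filling loop $\gamma$, and the space of such loops is unbounded in complexity. The crux is to isolate a single simple closed curve $\alpha$ on $S_n$ whose interaction with the point-push forces intersection growth by a factor of at least $2n-5$ independently of $\gamma$. This will likely require exploiting the peripheral structure of $S_n$ (an $(n-3)$-punctured component after cutting along $\alpha$) together with the fact that filling loops cannot avoid any essential arc between punctures.
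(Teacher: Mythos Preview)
Your overall framing via the Birman kernel and Kra's theorem matches the paper, but the core of your lower-bound argument has a genuine gap, and your upper-bound approach diverges from the paper's in a way worth noting.

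\textbf{Lower bound.} The step ``a one-step estimate $i(f_\gamma(\alpha),\beta)\geq 2n-5$ uniform over filling $\gamma$, combined with $\lambda(f_\gamma)=\lim_k i(f_\gamma^k(\alpha),\beta)^{1/k}$, gives $\lambda(f_\gamma)\geq 2n-5$'' does not work. A bound on a single iterate says nothing about the exponential growth rate; you would need a multiplicative estimate of the form $i(f_\gamma^{m+1}(\alpha),\beta)\geq (2n-5)\cdot i(f_\gamma^{m}(\alpha),\beta)$ (or equivalently $i(f_\gamma^{m}(\alpha),\beta)\geq c\,(2n-5)^m$). Worse, the one-step estimate itself fails in general: the paper observes that in the worst case $i(f(c),c)=0$, since $f(c)$ and $c$ can bound a once-punctured annulus. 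The paper's actual mechanism is different and is precisely what your proposal is missing: the relevant invariant of $\gamma$ is its \emph{self-intersection number} $i_\gamma$. By tracking the ``strands'' the pushed point carries, the paper shows that each full traversal of $\gamma$ multiplies the strand count by at least $1+2i_\gamma$ (because at each self-intersection the moving packet picks up two extra copies of what it carried on the earlier visit), yielding $i(f^{1+m}(c),c)\geq 2\sum_{j=1}^m (1+2i_\gamma)^j$ and hence $\lambda(f)\geq 1+2i_\gamma$. A separate Euler-characteristic count on the $4$-valent graph $\gamma\subset S_{n-1}$ forces $i_\gamma\geq n-3$, and the two combine to give $\lambda(f)\geq 2n-5$. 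Your plan never introduces $i_\gamma$ and has no replacement for either of these two steps.

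\textbf{Upper bound.} Here your strategy is plausible but different from the paper's and overclaims. The paper does \emph{not} build an invariant train track or compute a Perron--Frobenius eigenvalue, and it does not establish equality $\lambda(f_{\gamma_0})=2n^2-6n+3$; it only proves the inequality $\lambda(f_{\gamma_0})\leq 2n^2-6n+3$. It does this with the same strand-counting machinery used for the lower bound, applied to a specific filling curve $\gamma_0$ with $i_{\gamma_0}=n-3$ self-intersections arranged linearly (the curve in Figure~7), together with a particular test curve $c$. An explicit recursion bounds the strand count from above by $2(2k^2+6k+3)^{m-1}$ with $k=n-3$, and Thurston's limit theorem then gives $\lambda(f)\leq 2k^2+6k+3=2n^2-6n+3$. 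Your train-track route could in principle yield the same (or a sharper) bound, but it is a substantially different and more labor-intensive argument than what the paper actually does.
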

\noindent {\em Remark. } By Theorem 1.10 of Dowdall \cite{D}, for a compact Riemann surface of genus $p\geq 2$, $\tfrac{1}{5}\ \log (2p)\leq L(\mathscr{F})< p\ \log (11)$. Our argument improves the lower bound and shows that for any Riemann surface of type $(p,n)$ with $3p+n>3$, $L(\mathscr{F})\geq \log \left(4p+2n-5 \right)$ if $n>1$, and $L(\mathscr{F})\geq \log \left( 4p-1 \right)$ if $n=1$. See Corollary 3.2.

\medskip
 
Theorem \ref{T2} can be used to study pseudo-Anosov maps with small dilatations in some special cases.
It is well known \cite{Th} that for any two simple closed geodesics $a$ and $b$ on a Riemann surface $S$ of type $(p,n)$ with $3p+n>3$, if $(a,b)$ fills $S$ (in the sense that the complement $S\backslash \{a,b\}$ consists of disks and possibly once punctured disks), then the products 
\begin{equation}\label{PP}
t_{a}^{r}\circ t_b^{-s}
\end{equation} 
for all positive integers $r$ and $s$ are pseudo-Anosov, where $t_c$ denotes the positive Dehn twist along $c$. In \cite{H-L}, Hubert--Lanneau proved that some pseudo-Anosov maps are not of the forms (\ref{PP}). In \cite{Lei}, Leininger showed that if $f$ is such that 
log $\lambda(f)=L(\mbox{Mod}_S)$, then $f$ is not of the form (\ref{PP}). As an application of Theorem \ref{T2}, we prove the following result.
\begin{thm}\label{T4}
Let $S_n$ be as given in $(\ref{SPHERE})$. Assume that $n\geq 7$. Let $f\in \mathscr{F}$ be such that $\log \lambda(f)=L(\mathscr{F})$.  Then $f$ cannot be represented by a product of two Dehn twists along any two simple closed geodesics on $S_n$.
\end{thm}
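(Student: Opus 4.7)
The plan is to argue by contradiction. Suppose $f=t_a^{r}\circ t_b^{-s}$ for some simple closed geodesics $a,b$ on $S_n$ and integers $r,s\geq 1$. Since $f$ is pseudo-Anosov, Thurston's construction recalled around (\ref{PP}) forces $(a,b)$ to fill $S_n$, and the dilatation obeys the classical formula $\lambda(f)+\lambda(f)^{-1}=rs\cdot i(a,b)^{2}+2$. Combined with the upper bound $\lambda(f)\leq 2n^{2}-6n+3$ from Theorem~\ref{T2}, this yields
\[
rs\cdot i(a,b)^{2}\leq 2n^{2}-6n+1, \qquad(\star)
\]
which will be the main quantitative ingredient.

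Next I would exploit $f\in\mathscr F$. The forgetful homomorphism $\pi\colon \mbox{Mod}_{S_n}\to \mbox{Mod}_{S_{n-1}}$ obtained by filling in $x_n$ sends $f$ to the identity, so $t_{\hat a}^{\,r}\circ t_{\hat b}^{-s}=\mathrm{id}$ in $\mbox{Mod}_{S_{n-1}}$, where $\hat a,\hat b$ are the images of $a,b$. Using that any two Dehn twists along curves with geometric intersection $\geq 2$ generate a free group of rank two, one deduces that either
\begin{itemize}
\item[(I)] both $t_{\hat a}$ and $t_{\hat b}$ are trivial, in which case each of $\hat a,\hat b$ bounds a disk or once-punctured disk on $S_{n-1}$; since $a,b$ are essential on $S_n$, each must enclose a twice-punctured disk on $S_n$ containing $x_n$, whence $i(a,b)=2$ and the filling condition limits $n\leq 4$; or
\item[(II)] $\hat a\simeq \hat b$ as essential non-peripheral curves on $S_{n-1}$, and $r=s$; equivalently, $b$ is obtained from $a$ by a nontrivial point-push around $x_n$.
\end{itemize}

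Case (I) is immediately ruled out by the hypothesis $n\geq 7$, and the heart of the argument is case (II). Here I would establish the sharpened estimate
\[
i(a,b)\geq 2(n-3),
\]
strictly improving the naive Euler bound $i(a,b)\geq n-2$. The improvement comes from the fact that on $S_{n-1}$ the curves $a$ and $b$ co-bound an annulus whose interior contains $x_n$, while $\hat a$ splits the remaining $n-1$ punctures into two subsurfaces each carrying at least two of them; isolating each of these $n-1$ punctures into its own complementary region of $a\cup b$ on $S_n$ requires a new pair of intersection points, because $a$ and $b$ run in parallel after filling in $x_n$. Substituting $i(a,b)\geq 2(n-3)$ and $r=s\geq 1$ into $(\star)$ yields
\[
4(n-3)^{2}\leq 2n^{2}-6n+1,\qquad\text{i.e.,}\qquad 2n^{2}-18n+35\leq 0,
\]
whose larger root is $(9+\sqrt{11})/2<7$, so the inequality fails for every $n\geq 7$, producing the required contradiction.

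The principal obstacle is the intersection estimate $i(a,b)\geq 2(n-3)$ in case (II): the Euler-characteristic argument alone gives only $i(a,b)\geq n-2$, which is far too weak for $(\star)$. One must exploit in an essential way the constraint $\hat a\simeq\hat b$ -- so that on $S_{n-1}$ the pair $(a,b)$ behaves as a doubled copy of a single essential curve -- to recover the extra factor of two, and this factor is precisely what fixes the numerical threshold $n\geq 7$ in the statement.
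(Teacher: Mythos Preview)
Your overall architecture---upper bound $\lambda(f)\le 2n^{2}-6n+3$ from Theorem~\ref{T2}, lower bound on $\lambda(f)$ from the two-twist structure, contradiction for $n\ge 7$---is exactly the paper's. The paper, however, does not attempt to prove the lower bounds itself: it invokes \cite{CZ3} (Lemmas~4.1, 4.3, 5.1, 5.2) to get $\lambda(f)>h_1(4n-10)$ when both $\hat a,\hat b$ are trivial on $S_{n-1}$ and $\lambda(f)>h(2n-6)$ when both are nontrivial, then checks that either bound is incompatible with $2n^{2}-6n+3$ once $n\ge 7$. Via the Thurston trace formula you use, the second bound is exactly $\sqrt{rs}\cdot i(a,b)>2n-6$, i.e.\ essentially your claimed inequality $i(a,b)\ge 2(n-3)$; so your Case~(II) target is correct, but the ``parallel copies force paired intersections'' heuristic you offer is not a proof, and you rightly flag it as the principal obstacle. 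The paper simply outsources this step.

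Your Case~(I) argument, on the other hand, contains a genuine error. The assertion ``$i(a,b)=2$'' is false: on a punctured sphere there are infinitely many isotopy classes of simple closed curves bounding a twice-punctured disk containing $\{x_n,x_i\}$, and for a fixed $a$ one can choose $b$ (still enclosing $\{x_n,x_j\}$) with $i(a,b)$ arbitrarily large. Consequently the conclusion ``filling forces $n\le 4$'' does not follow; for suitably chosen $a,b$ of this type the pair can fill $S_n$ for any $n$. The paper's handling of this case (via $h_1(4n-10)$, which in your language amounts to $rs\cdot i(a,b)^{2}>(4n-10)^{2}-4$) shows that the relevant intersection bound here is roughly twice as strong as in Case~(II), not that the case is vacuous. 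So to complete your approach you would need, in both cases, intersection-number lower bounds of the strength established in \cite{CZ3}; the Euler-characteristic count and the hand-waving about parallel copies are not enough.
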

%\noindent {\em Remark. } It is well known \cite{Lei} that most elements in $\left\langle t_a,t_b\right\rangle$ are pseudo-Anosov. Let $\mathscr{L}$ be the collection of all pseudo-Anosov elements in $\left\langle t_a,t_b\right\rangle$. Then by Theorem 1.1 of \cite{CZ3}, for any $f'\in \mathscr{L}$ and $n>5$, $\lambda(f')>h(n-2)$, where $h(x)=1+z^2+z\sqrt{2+z^2}$. Elementary calculation shows that for $n\geq 6$, $2n^2-6n+3>h(n-2)$. So the question of whether the map $f$ in Theorem \ref{T4} is in $\mathscr{L}$ remains to be answered.     

%\medskip

Although we have estimation (\ref{PEN3}), the exact values of $L(\mbox{Mod}_{S_n})$ are known only for a few simple cases, which are outlined as follows (see \cite{H-S} for more details). We denote by $\sigma_i$ the braid shown in Figure 1, which represents a generator in the Artin braid group. See Birman \cite{Bir} for more details. 

\bigskip
\medskip

%TeXCAD (http://texcad.sf.net/) Picture. File: [figure 1.pic]. Options on following lines.
%\grade{\on}
%\emlines{\off}
%\epic{\off}
%\beziermacro{\on}
%\reduce{\on}
%\snapping{\off}
%\pvinsert{% Your \input, \def, etc. here}
%\quality{8.000}
%\graddiff{0.005}
%\snapasp{1}
%\zoom{4.0000}
\unitlength 1mm % = 2.845pt
\linethickness{0.4pt}
\ifx\plotpoint\undefined\newsavebox{\plotpoint}\fi % GNUPLOT compatibility
\begin{picture}(91.5,37)(0,0)
\put(42,35){\line(0,-1){30.5}}
\put(54.25,35){\line(0,-1){30}}
\put(80,35.25){\line(0,-1){30.25}}
\put(91.5,35.25){\line(0,-1){30.25}}
\put(63.25,35){\line(0,-1){11.75}}
%\emline(63.25,23.25)(71,14.25)
\multiput(63.25,23.25)(.033695652,-.039130435){230}{\line(0,-1){.039130435}}
%\end
\put(71,14.25){\line(0,-1){9}}
\put(71.25,34.5){\line(0,-1){11}}
\put(71.25,23.5){\line(-2,-3){3}}
%\emline(66.5,17.25)(63.25,13.5)
\multiput(66.5,17.25)(-.033505155,-.038659794){97}{\line(0,-1){.038659794}}
%\end
\put(63.25,13.5){\line(0,-1){8.5}}
\put(42,36.75){\makebox(0,0)[cc]{1}}
\put(54.25,37){\makebox(0,0)[cc]{$i-1$}}
\put(63,36.75){\makebox(0,0)[cc]{$i$}}
\put(71,36.25){\makebox(0,0)[cc]{$i+1$}}
\put(80,36.5){\makebox(0,0)[cc]{$i+2$}}
\put(91.5,37){\makebox(0,0)[cc]{$n$}}
%\dottedline(44.75,21)(52,21)
\multiput(44.68,20.93)(.90625,0){9}{{\rule{.4pt}{.4pt}}}
%\end
%\dottedline(82.25,20.75)(90.25,20.75)
\multiput(82.18,20.68)(.88889,0){10}{{\rule{.4pt}{.4pt}}}
%\end
\put(67.5,.25){\makebox(0,0)[cc]{Figure 1}}
\end{picture}

\bigskip
%\medskip

The 3-braid $\sigma_2\sigma_1^{-1}$ is proved (Matsuoka \cite{M} and Handel \cite{H}) to be pseudo-Anosov and has the minimum dilatation among pseudo-Anosov braids. If we identify the boundary $\partial \Delta$, where $\Delta=\{z:|z|<1\}$, with a point, and identify those $\sigma_i$ with the Magnus generators (denoted by $\sigma_i$ also) interchanging $x_i$ and $x_{i+1}$, we obtain a pseudo-Anosov map $\sigma=\sigma_2\sigma_1^{-1}$ on a $4$-punctured sphere $S_4$. Similar constructions are also valid when $n=5,6$. It was shown in Ham--Song \cite{H-S} that $\sigma$ is pseudo-Anosov and has the minimum dilatation among pseudo-Anosov maps, where $\sigma=\sigma_3\sigma_2\sigma_1^{-1}$ if $n=5$ and $\sigma=\sigma_1\sigma_2\sigma_3\sigma_4\sigma_1\sigma_2$ if $n=6$. 

We remark that the above instances are the only known pseudo-Anosov maps with the minimum dilatations. By examining these examples we find that $\sigma$ has the following common properties: (i) $\sigma(x_i)\neq x_i$ for $1\leq i\leq n-1$, (ii) $\sigma$ fixes $x_n$ and (iii) when the puncture $x_n$ is filled in, the mapping class $\sigma^q$ for any $q\leq n$ defines a nontrivial mapping class on $S_{n-1}$. The following result says that on any punctured Riemann sphere $S_n$, $n\geq 4$, these properties remain valid for pseudo-Anosov maps with the minimum dilatations even though these pseudo-Anosov maps are still unknown.
\begin{thm}\label{T3}
Let $S_n$ be as given in $(\ref{SPHERE})$.
Let $n\geq 4$. Let $f_n:S_n\rightarrow S_n$ be a pseudo-Anosov map such that $L(\mbox{Mod}_{S_n})=\log \lambda(f_n)$. Then $f_n$ is not isotopic to the identity on $S\cup \{x_i\}$ whenever $f_n(x_i)=x_i$. Furthermore, if $n\geq 16$, then for any integer $q\leq n$, $f_n^q$ is not isotopic to the identity on $S\cup \{x_i\}$ whenever $f_n^q(x_i)=x_i$. 
\end{thm}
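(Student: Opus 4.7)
The plan is to combine the lower bound $L(\mathscr{F})\ge \log(2n-5)$ from Theorem \ref{T2} with the upper bound \eqref{PEN2} on $L(\mbox{Mod}_{S_n})$.

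For the first assertion, I argue by contradiction. Suppose $f_n(x_i)=x_i$ and that $f_n$ is isotopic to the identity on $S_n\cup\{x_i\}$. After relabeling the punctures via a self-homeomorphism of $\mathbf{S}^2$ (which preserves both pseudo-Anosovity and the dilatation), I may assume $i=n$, so that $f_n\in\mathscr{F}$. Theorem \ref{T2} then forces $\lambda(f_n)\ge 2n-5$. On the other hand, since $f_n$ attains the minimum dilatation on $\mbox{Mod}_{S_n}$, the upper bound \eqref{PEN2} gives $\lambda(f_n)<(2+\sqrt{3})^{2/(n-3)}$. These two inequalities are incompatible whenever $2n-5>(2+\sqrt{3})^{2/(n-3)}$, which a direct calculation confirms for all $n\ge 5$. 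For the residual case $n=4$ the bound \eqref{PEN2} is too weak; here I appeal instead to the Matsuoka--Handel theorem that the minimum dilatation on $\mbox{Mod}_{S_4}$ equals $(3+\sqrt{5})/2\approx 2.618$, which is strictly less than $3=2n-5$, so $f_4$ still cannot lie in $\mathscr{F}$.

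For the second assertion, I apply the same idea to $f_n^q$. If $f_n^q(x_i)=x_i$ and $f_n^q$ is isotopic to the identity on $S_n\cup\{x_i\}$, then (after relabeling) $f_n^q\in\mathscr{F}$; note $f_n^q$ is pseudo-Anosov with dilatation $\lambda(f_n)^q$. Theorem \ref{T2} yields $\lambda(f_n)^q\ge 2n-5$. Combining with $\lambda(f_n)<(2+\sqrt{3})^{2/(n-3)}$ and $q\le n$ gives
$$2n-5\le \lambda(f_n)^q<(2+\sqrt{3})^{2q/(n-3)}\le (2+\sqrt{3})^{2n/(n-3)}.$$
A direct numerical check confirms that $2n-5>(2+\sqrt{3})^{2n/(n-3)}$ for every $n\ge 16$, so the displayed chain is contradictory and the conclusion follows.

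The main obstacle is the sharpness of this comparison at the threshold. The hypothesis $n\ge 16$ in the second part pinpoints exactly where the gap between $\log(2n-5)$ and $\tfrac{2n}{n-3}\log(2+\sqrt{3})$ first opens up (at $n=15$ one computes $25<26.9$, while at $n=16$ one has $27>25.6$), and the need to handle $n=4$ separately in the first part stems from the same tightness. Any quantitative improvement in either bound in Theorem \ref{T2} or in \eqref{PEN2} would lower these thresholds.
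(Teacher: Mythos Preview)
Your proof is correct and follows essentially the same route as the paper: argue by contradiction, apply the lower bound $L(\mathscr{F})\ge\log(2n-5)$ from Theorem~\ref{T2}, compare against the Hironaka--Kin upper bound \eqref{PEN2} (for the second part using $\lambda(f_n^q)=\lambda(f_n)^q$ and $q\le n$), and treat $n=4$ separately via the known minimum. Your handling of $n=4$ is in fact slightly cleaner than the paper's, since you only need that $L(\mbox{Mod}_{S_4})\le (3+\sqrt{5})/2<3$, whereas the paper verifies directly that the specific minimizer $\sigma_2\sigma_1^{-1}$ has the required property.
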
  
%\noindent {\em Remark. } The original version of Hironaka--Kin's result \cite{H-K} states that  
%\begin{equation}\label{PENN2}
%L(\mbox{Mod}_{S_n})<\frac{2\ \mbox{log}(2+\sqrt{3})}{n-3}.
%\end{equation} 
%We will use (\ref{PENN2}) to prove the second statement of Theorem 1.3, and use (\ref{PEN2}) to prove the first statement of Theorem 1.3.

This paper is organized as follows. In Section 2 we present some background materials needed in our estimations of $\lambda(f)$ for elements of $\mathscr{F}$. A lower bound and an upper bound for $\lambda(f)$, $f\in \mathscr{F}$, in punctured Riemann sphere cases will be given in Section 3 and Section 4 respectively. The proof of Theorem 1.1 also appears in these two sections. In Section 5, we prove Theorem 1.2 and Theorem 1.3. In Appendix, we sketch a new proof of Theorem \ref{K} by using methods different from Kra \cite{Kr}.  

\bigskip

\noindent {\bf Acknowledgment. } I am grateful to the referees for their valuable comments and thoughtful suggestions. 

\section{Background and Notation}
\setcounter{equation}{0}

Let $S$ be a Riemann surface of type $(p,n)$. Assume that $3p+n>3$ and $n\geq 1$. Let $x$ be a puncture. Write $\tilde{S}=S\cup \{x\}$. Then $\tilde{S}$ is of type $(p,n-1)$. Let $\mathscr{J}(\tilde{S})$ be the set of conformal structures on $\tilde{S}$. The Teichm\"{u}ller space $T(\tilde{S})$ is the quotient space $\mathscr{J}(\tilde{S})/\! \sim$, where two conformal structures $\nu_1$ and $\nu_2$ are considered equivalent if there is a conformal map $h:\nu_1(\tilde{S})\rightarrow \nu_2(\tilde{S})$ such that $h$ is isotopic to the identity on the underlying surface $\tilde{S}$. $T(\tilde{S})$ is a complex manifold with dimension $3p-3+n$. See \cite{A-B,Bers1, Bers3,Kr} for an detailed account of Teichm\"{u}ller theory.   

Let $V(\tilde{S})\rightarrow T(\tilde{S})$ be the fiber bundle so that the fiber over a point $\tau\in T(\tilde{S})$ is a Riemann surface representing the point $\tau$. Let $F(\tilde{S})\rightarrow V(\tilde{S})$ be the universal covering space. We thus have the natural projection $\pi:F(\tilde{S})\rightarrow T(\tilde{S})$. An important theorem of Bers \cite{Bers1} states that there exists a biholomorphic map $\varphi:F(\tilde{S})\rightarrow T(S)$ so that $\iota\circ \varphi=\pi$, where $\iota:T(S)\rightarrow T(\tilde{S})$ is the natural projection defined by forgetting the puncture $x$. 

Let $\mathbf{H}$ denote the unit disk equipped with the hyperbolic metric with constant negative curvature $-1$. Let $\varrho:\mathbf{H}\rightarrow \tilde{S}$ be the universal covering map with a covering group $G$. Fix a point $x\in \tilde{S}$, then the fundamental group $\pi_1(\tilde{S},x)$ is isomorphic to $G$. Thus every filling closed geodesic $\gamma$ on $\tilde{S}$ (that is, $\tilde{S}\backslash \gamma$ is the union of disks and once punctured disks) determines a family of hyperbolic elements $g$ of $G$. These elements can act on $F(\tilde{S})$ as holomorphic automorphisms. Now with the aid of the Bers isomorphism $\varphi$, we obtain a family $\mathscr{F}_{\gamma}$ of mapping classes $g^*=\varphi\circ g\circ \varphi^{-1}$ on $S$. All elements of $\mathscr{F}_{\gamma}$ fix the puncture $x$ and project to the trivial mapping class on $\tilde{S}$.  Let $\mathscr{F}$ be the set of pseudo-Anosov mapping classes of $S$ isotopic to the identity on $\tilde{S}$. By Theorem 2 of Kra \cite{Kr}, we have  $\mathscr{F}_{\gamma}\subset \mathscr{F}$. 

Elements in $\mathscr{F}_{\gamma}$ can also be fully described by the geometric and topological terms (\cite{D}). Each filling closed geodesic $\gamma$ can be parametrized as $\gamma:[0,1]\rightarrow \tilde{S}$ such that $x=\gamma(0)=\gamma(1)$. This parametrization  determines an isotopy $I:\{x\}\times [0,1]\rightarrow \tilde{S}$ given by $I(x,\cdot)=\gamma(t)$ even if $\gamma$ is self-intersecting. Furthermore, the isotopy $I(x,\cdot)$ can be extended (via the identity) to an isotopy 
\begin{equation}\label{EXT}
\tilde{I}:\tilde{S}\times [0,1]\rightarrow \tilde{S}.
\end{equation}
The resulting map $f=\tilde{I}(\cdot,1)$ fixes the base point $x$ and hence defines a map on $S=\tilde{S}\backslash \{x\}$. In the literature, $f$ is called a point-pushing homeomorphism corresponding to the curve $\gamma$. 

To see how an element $f\in \mathscr{F}$ determines a filling closed geodesic $\gamma$, we let $F:\tilde{S}\times [0,1]\rightarrow \tilde{S}$ denote the corresponding isotopy between $f$ and the identity. That is,
$$
F(\cdot,0)=f \ \  \mbox{and}\ \  F(\cdot,1)=id.
$$
Then $F(x,t)$, $t\in [0,1]$, defines a Jordan curve. In its homotopy class, a geodesic representative is a closed geodesic $\gamma$. Theorem 2 of \cite{Kr} also implies that $\gamma$ is filling, and thus (by combining results of Bers \cite{Bers1} and Birman \cite{Bir}),  $f\in \mathscr{F}_{\gamma}$. In summary, we have
\begin{thm}[Kra \cite{Kr}]\label{K}
$\mathscr{F}=\bigcup \left\{ \mathscr{F}_{\gamma}: \gamma \ \mbox{is filling on}\ \tilde{S}\right\}$.
\end{thm}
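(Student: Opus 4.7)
The direction $\bigcup_\gamma \mathscr{F}_\gamma \subseteq \mathscr{F}$ is Theorem~2 of~\cite{Kr}, so it remains to establish the reverse inclusion $\mathscr{F} \subseteq \bigcup_\gamma \mathscr{F}_\gamma$. Given $f \in \mathscr{F}$, extend $f$ to a homeomorphism of $\tilde{S}$ fixing $x$, and choose an isotopy $F:\tilde{S}\times[0,1]\to\tilde{S}$ with $F(\cdot,0)=id$ and $F(\cdot,1)=f$. The trajectory $\delta(t)=F(x,t)$ is a loop in $\tilde{S}$ based at $x$; let $\gamma$ denote the closed geodesic in its free homotopy class. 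The construction recalled in (\ref{EXT}) yields $f\in\mathscr{F}_\gamma$ once we have chosen $\delta$, so the whole theorem reduces to the claim that $\gamma$ must be filling on $\tilde{S}$.

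\noindent\textbf{Core contradiction.} Suppose $\gamma$ is not filling. Then some component $W$ of $\tilde{S}\setminus\gamma$ is neither an open disk nor a once-punctured disk of $\tilde{S}$, and hence $W$ contains an essential simple closed curve $\alpha\subset\tilde{S}$ disjoint from $\gamma$. Being essential on $\tilde{S}$, $\alpha$ bounds no embedded disk in $\tilde{S}$; in particular no disk containing the forgotten puncture $x$. Together with the fact that $\alpha$ is non-peripheral on $\tilde{S}$, this shows that $\alpha$ is essential and non-peripheral on $S=\tilde{S}\setminus\{x\}$. The plan is then to show that $f$ preserves the isotopy class of $\alpha$ in $S$, which contradicts $f$ being pseudo-Anosov (a pseudo-Anosov map admits no invariant isotopy class of essential simple closed curves) and so proves that $\gamma$ is filling.

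\noindent\textbf{Main step and main obstacle.} To show $f(\alpha)\simeq \alpha$ on $S$, realize the point-push $f$ by an ambient isotopy of $\tilde{S}$ supported in an arbitrarily thin tubular neighborhood of $\delta$. If $x$ and $\gamma$ lie in the same component of $\tilde{S}\setminus\alpha$, this neighborhood can be chosen disjoint from $\alpha$, and then $f$ fixes $\alpha$ pointwise on $\tilde{S}$. The delicate case is when $x$ and $\gamma$ lie on opposite sides of $\alpha$: then any based representative of $\delta$ has zero algebraic intersection with $\alpha$ (because $\delta$ is freely homotopic to $\gamma$, which is disjoint from $\alpha$) but geometric intersection at least two, and the naive isotopy $F_t(\alpha)$ sweeps across $x$ at two intermediate times, so it is not an isotopy in $S$. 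The main obstacle is to cancel these two sweeps, i.e.\ to produce an isotopy from $\alpha$ to $f(\alpha)$ that avoids $x$. This can be handled either by an explicit computation of the Birman point-push homomorphism $\pi_1(\tilde{S},x)\to\mbox{Mod}_S$ on $[\alpha]$, confirming $f_*[\alpha]=[\alpha]$ up to a peripheral loop around $x$, or by a direct surface-topology argument producing an embedded annulus in $\tilde{S}\setminus\{x\}$ cobounded by $\alpha$ and $f(\alpha)$. Either route avoids the Bers fiber space argument used by Kra, giving a proof different in spirit from the one in~\cite{Kr}.
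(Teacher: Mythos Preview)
Your overall strategy for the inclusion $\mathscr{F}\subseteq\bigcup_\gamma\mathscr{F}_\gamma$ matches the paper's appendix: assume $\gamma$ is not filling, pick an essential simple closed curve $\tilde c$ disjoint from $\gamma$, and contradict pseudo-Anosovness by exhibiting an invariant curve on $S$. You also correctly isolate the delicate case where $\tilde c$ separates and $x$ lies on the side opposite to $\gamma$. However, your proposed resolution of that case is wrong: the assertion $f(\alpha)\simeq_S\alpha$ is \emph{false} in general there, so neither an ``explicit Birman computation'' nor an ``embedded annulus'' argument can succeed.

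Here is a concrete counterexample. Let $\tilde c=\alpha$ separate $\tilde S$ into $N$ and $C$ with $x\in C$, pick a puncture $p\in N$, and take $\delta$ to be a simple ``lasso'' based at $x$ encircling $p$ (so the geodesic $\gamma$ is the peripheral loop around $p$, disjoint from $\alpha$). Then $\mathrm{Push}(\delta)=T_aT_b^{-1}$ where $a,b$ bound an annular neighbourhood of $\delta$; on $S$ the curve $b$ is peripheral (around $p$) while $a$ encircles $\{p,x\}$. Hence $f=T_a$, and since $p$ and $x$ lie on opposite sides of $\alpha$ we have $i(a,\alpha)=2$, so $T_a(\alpha)\not\simeq_S\alpha$. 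The two crossings of $\delta$ with $\alpha$ do \emph{not} cancel.

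What does go through---and this is exactly what the paper's appendix asserts---is that $f$ fixes \emph{some} curve $c$ on $S$ which is isotopic to $\tilde c$ on $\tilde S$, though generally $c\neq\alpha$ on $S$. The clean way to see it: write $\delta=\eta\cdot\gamma_y\cdot\bar\eta$ with $\eta$ a path from $x$ to a point $y\in\gamma$, let $\phi:(\tilde S,x)\to(\tilde S,y)$ be the point-push along $\eta$, and observe $f=\phi^{-1}\circ\mathrm{Push}_y(\gamma_y)\circ\phi$. Since $y$ and $\gamma_y$ both lie in the component of $\tilde S\setminus\tilde c$ containing $\gamma$, the middle map fixes $\tilde c$ on $\tilde S\setminus\{y\}$; hence $f$ fixes $c:=\phi^{-1}(\tilde c)$ on $S$. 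This $c$ is essential on $S$ and isotopic to $\tilde c$ on $\tilde S$, giving the desired contradiction. In the counterexample above, $c$ separates $\{p,q,r,x\}$ from the remaining punctures, and indeed $i(a,c)=0$.

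So the gap is not that the delicate case needs more work---it needs a \emph{different} target: replace ``$f(\alpha)\simeq_S\alpha$'' by ``$f(c)\simeq_S c$ for some $c\simeq_{\tilde S}\tilde c$'', obtained by transporting $\tilde c$ along a path from $x$ to $\gamma$.
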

See also \cite{F-M} for different treatments of this result.  In Appendix, we will give another proof of Theorem \ref{K}.

Let $a$ and $b$ be arbitrary simple closed geodesics on $S$. We define the geometric intersection number $i(a,b)$ to be 
$$
\mbox{inf }\{a'\cap b'\},
$$
where $a'$ and $b'$ are homotopic to $a$ and $b$, respectively. The following theorem is due to Thurston. See \cite{FLP} for a proof.
\begin{thm}[Thurston \cite{FLP}]\label{TT}
Let $f$ be a pseudo-Anosov map with dilatation $\lambda(f)$. Then for any two simple closed geodesics $a$ and $b$ on $S$, as $k\rightarrow +\infty$, the ratio
$$
\frac{i(f^k(a),b)}{\lambda(f)^k}\rightarrow \kappa,
$$
where $\kappa$ is the number with $0<\kappa<+\infty$. 
\end{thm}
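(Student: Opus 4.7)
The plan is to exploit Thurston's action of $f$ on the space $\mathcal{MF}(S)$ of measured foliations, since that is the natural setting where $\lambda(f)$ appears as a stretch factor and $i(\cdot,\cdot)$ extends continuously.

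First I would recall the standard embedding of isotopy classes of essential simple closed curves into $\mathcal{MF}(S)$: each simple closed geodesic $c$ is represented by a measured foliation supported in an annular neighborhood of $c$ whose transverse measure of a transverse arc equals its geometric intersection number with $c$. Under this embedding the geometric intersection pairing $i(\cdot,\cdot)$ extends to a continuous, symmetric, $\mathbb{R}_{\geq 0}$-homogeneous pairing on $\mathcal{MF}(S) \times \mathcal{MF}(S)$. The projectivization $\mathcal{PMF}(S)$ is compact, and the mapping class of $f$ acts on both $\mathcal{MF}(S)$ and $\mathcal{PMF}(S)$; the defining relations $f(\mathcal{F}_\pm) = \lambda^{\pm 1}\mathcal{F}_\pm$ make $[\mathcal{F}_\pm]$ fixed projective classes.

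The key dynamical input is Thurston's north--south dynamics on $\mathcal{PMF}(S)$: for every $[\mu] \neq [\mathcal{F}_-]$ one has $f^k([\mu]) \to [\mathcal{F}_+]$, while $[\mathcal{F}_-]$ is repelling. To promote this to a statement in $\mathcal{MF}(S)$, I would show that for any essential simple closed geodesic $a$,
$$
\frac{1}{\lambda^k}\, f^k(a) \longrightarrow i(\mathcal{F}_-, a)\, \mathcal{F}_+ \quad \text{in } \mathcal{MF}(S).
$$
The scalar is pinned down by pairing with $\mathcal{F}_-$: because $i$ is homeomorphism-invariant and $f(\mathcal{F}_-) = \lambda^{-1}\mathcal{F}_-$, one computes $i(\mathcal{F}_-, f^k(a)) = \lambda^k\, i(\mathcal{F}_-, a)$, so any $\mathcal{MF}$-limit of $\lambda^{-k} f^k(a)$ must pair with $\mathcal{F}_-$ to give exactly $i(\mathcal{F}_-, a)$. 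Combined with transversality $i(\mathcal{F}_+, \mathcal{F}_-) = 0$ and the fact that every projective subsequential limit equals $[\mathcal{F}_+]$, this identifies the limit as claimed. The required expansion--contraction behavior with eigenvalue $\lambda$ in $\mathcal{MF}(S)$ itself is typically established via the Perron--Frobenius eigenvector of the transition matrix of the invariant train track carrying $\mathcal{F}_+$.

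With this $\mathcal{MF}$-convergence in hand, the theorem follows from continuity of the intersection pairing:
$$
\frac{i(f^k(a), b)}{\lambda^k} = i\!\left(\frac{f^k(a)}{\lambda^k},\, b\right) \longrightarrow i(\mathcal{F}_-, a)\, i(\mathcal{F}_+, b) =: \kappa.
$$
The value $\kappa$ is finite by continuity and strictly positive because $\mathcal{F}_\pm$ are filling measured foliations, so each has positive intersection number with every essential simple closed geodesic. The main obstacle is the $\mathcal{MF}(S)$-level convergence with the correct scalar $i(\mathcal{F}_-, a)$: projective north--south dynamics alone does not determine this scalar, so making the soft dynamics quantitative requires the Perron--Frobenius/train-track calculation, which is the technical heart of the argument.
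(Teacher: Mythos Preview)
The paper does not prove this statement at all: Theorem~\ref{TT} is quoted as a background result due to Thurston, with the sentence ``See \cite{FLP} for a proof,'' and is then used as a black box in Sections~3 and~4. So there is no ``paper's own proof'' to compare against; your outline is essentially the classical argument one finds in \cite{FLP}, via the continuous extension of $i(\cdot,\cdot)$ to $\mathcal{MF}(S)$ and the north--south dynamics of a pseudo-Anosov on $\mathcal{PMF}(S)$.

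One correction to your sketch: you write ``transversality $i(\mathcal{F}_+,\mathcal{F}_-)=0$,'' but this is backwards. The stable and unstable foliations are transverse and together fill $S$, so $i(\mathcal{F}_+,\mathcal{F}_-)>0$; it is precisely this positivity that lets you solve for the scalar. Your own computation $i(\mathcal{F}_-,\lambda^{-k}f^k(a))=i(\mathcal{F}_-,a)$ is right, and passing to the limit against $c\,\mathcal{F}_+$ gives $c\, i(\mathcal{F}_+,\mathcal{F}_-)=i(\mathcal{F}_-,a)$, hence
\[
c=\frac{i(\mathcal{F}_-,a)}{i(\mathcal{F}_+,\mathcal{F}_-)},\qquad
\kappa=\frac{i(\mathcal{F}_-,a)\,i(\mathcal{F}_+,b)}{i(\mathcal{F}_+,\mathcal{F}_-)}.
\]
With that fix (or with the normalization $i(\mathcal{F}_+,\mathcal{F}_-)=1$, which you should state explicitly), the argument is the standard one and yields $0<\kappa<\infty$ since $\mathcal{F}_\pm$ are uniquely ergodic and filling.
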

In \cite{D} Dowdall used Theorem \ref{TT} to estimate the dilatation $\lambda(f)$ with respect to the intersection numbers of two curves. The same idea will be used in the rest of the paper. 

\section{A lower bound for the least dilatations of certain pseudo-Anosov maps}
\setcounter{equation}{0}

In this section, we assume that $S$ is a Riemann surface of type $(p,n)$ which contains at least one puncture $x$. As discussed in Section 2, every element $f\in \mathscr{F}$ is determined by a filling closed geodesic $\gamma$ on $S\cup \{x\}$ and vice versa. Since $\gamma$ is a filling curve, it must have self-intersection points. Let $i_{\gamma}$ denote the number of self-intersection points of $\gamma$. It was shown in \cite{D} that for any element $f\in \mathscr{F}$ with the corresponding filling closed geodesic $\gamma$, we have $\left(1+i_{\gamma} \right)^{1/5} \leq \lambda(f) \leq 9^{i_{\gamma}}$. 

Let $\mathscr{F}_0$ be the set of all pseudo-Anosov maps on $S$ obtained from a primitive filling closed geodesic $\gamma$ on $S\cup \{x\}$. Here by a primitive curve we mean that $\gamma$  cannot be expressed as a power of another curve. Then every element $f$ in $\mathscr{F}$ is conjugate in the fundamental group $\pi_1(S\cup \{x\})$ to a power of an element $f_0$ in $\mathscr{F}_0$.  The aim of this section is to give a better lower bound estimation of $\lambda(f)$ for $f\in \mathscr{F}_0$ with respect to the self-intersection number $i_{\gamma}$. We will later on prove the following result.
\begin{thm}\label{T1}
Let $f\in \mathscr{F}_0$ be determined by a primitive filling closed geodesic $\gamma$. Then the dilatation $\lambda(f)$ satisfies $\lambda(f)\geq 1+2 i_{\gamma}$.
\end{thm}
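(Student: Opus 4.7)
The plan is to prove the bound by combining Thurston's intersection growth formula (Theorem~\ref{TT}) with an explicit combinatorial analysis of the point-pushing homeomorphism $f$ along the primitive filling closed geodesic $\gamma$. By the Bers--Kra description (Theorem~\ref{K}) and the isotopy \eqref{EXT}, $f$ is isotopic to the point-pushing map $P_\gamma$ determined by $\gamma$. A regular neighborhood $N(\gamma)\subset\tilde S$ deformation retracts onto the $4$-valent graph $\Gamma$ with $i_\gamma$ vertices (the self-intersection points) and $2i_\gamma$ edges; this graph is the combinatorial backbone for the rest of the argument.

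First I would choose a pair of simple closed curves $\alpha,\beta$ on $S$ that interact transversely with $\gamma$ in a way dictated by $\Gamma$. A natural pick is to let $\alpha$ run along one side of $N(\gamma)$, so that $\alpha$ tracks $\gamma$ once, and let $\beta$ be a ``dual'' curve crossing each edge of $\Gamma$ exactly once, which exists because $\gamma$ is filling. The resulting intersection numbers $i(\alpha,\gamma)$ and $i(\beta,\gamma)$ are controlled by the combinatorics of $\Gamma$, and in particular are strictly positive.

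Next I would establish the per-iteration estimate
$$
i\bigl(P_\gamma(\alpha),\,\beta\bigr) \;\geq\; (1+2i_\gamma)\, i(\alpha,\beta),
$$
by inspecting the pushing isotopy. As $x$ sweeps once around $\gamma$, every self-intersection of $\gamma$ forces $\alpha$ to be dragged through a $4$-valent crossing, and a local model argument shows this contributes two new essential strands of $P_\gamma(\alpha)$ that meet $\beta$ (one from each of the two branches of $\gamma$ through that vertex), while the original strand of $\alpha$ provides the ``$+1$''. Because $f$ leaves the train track $\tau$ carried by $\alpha\cup\gamma$ invariant, the estimate propagates under iteration, yielding
$$
i\bigl(f^k(\alpha),\,\beta\bigr) \;\geq\; (1+2i_\gamma)^k\, i(\alpha,\beta).
$$
Substituting into Theorem~\ref{TT} and taking $k$-th roots gives $\lambda(f)\geq 1+2i_\gamma$, as required.

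The main obstacle is the per-iteration inequality in the preceding paragraph: the claim that each self-intersection of $\gamma$ contributes exactly two \emph{essential} arcs to $P_\gamma(\alpha)$, with none of them cancelling after passage to geodesic representatives. This is precisely where primitivity of $\gamma$ enters; if $\gamma$ were a proper power $\delta^k$, pairs of newly created arcs would form inessential bigons with old arcs, weakening the bound to $1+2i_\delta$. I therefore expect the bulk of the technical work to go into a careful bigon-criterion argument at each vertex of $\Gamma$, ruling out such cancellations for primitive $\gamma$; this also clarifies why the theorem is stated only for the primitive class $\mathscr{F}_0$ and extended to $\mathscr{F}$ only through the conjugacy statement in the surrounding discussion.
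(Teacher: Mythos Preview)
Your overall strategy---produce an exponential lower bound on $i(f^k(\alpha),\beta)$ and feed it into Theorem~\ref{TT}---is the same one the paper uses. But the core of your proposal, the per-iteration inequality
\[
i\bigl(P_\gamma(\alpha),\beta\bigr)\ \geq\ (1+2i_\gamma)\,i(\alpha,\beta)
\]
together with its iteration, is not established, and the plan you sketch for it does not match what actually works.

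There are two concrete problems. First, your curve $\alpha$ ``running along one side of $N(\gamma)$'' is not well defined: the boundary of a regular neighborhood of a filling self-intersecting curve has many components (one for each complementary region), and none of them individually ``tracks $\gamma$ once.'' Second, and more seriously, a single-step multiplicative bound of this form is simply false for generic simple closed curves: the paper observes in \eqref{BN} that for a curve $c$ meeting $\gamma$ once one has $i(f(c),c)=0$, so no uniform bound $i(f(\cdot),\cdot)\geq(1+2i_\gamma)\,i(\cdot,\cdot)$ can hold. Even if you pick a favorable pair $(\alpha,\beta)$, the inequality does not iterate automatically: to pass from $k$ to $k+1$ you would need the same bound with $\alpha$ replaced by $f^k(\alpha)$, and your train-track invariance claim (``$f$ leaves the train track $\tau$ carried by $\alpha\cup\gamma$ invariant'') is neither stated precisely nor proved.

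What the paper actually does is different and more direct. It fixes a single simple closed curve $c$ on $S$ that is nontrivial on $\tilde S$ and tracks the number $\mu_m(t)$ of \emph{strands} carried by the moving point $x_t$ during the $m$-th traversal of $\gamma$. The key local computation is inequality~\eqref{P0}: when $x_t$ crosses a self-intersection point $P_j$ for the second time in a given lap, it picks up at least $2\mu_m(t_j^1)$ additional strands (the leaves deposited there on the first visit). Summing over all $k=i_\gamma$ self-intersections yields that the strand count multiplies by at least $1+2i_\gamma$ per lap, which is Lemma~\ref{L2}; the intersection-number bound \eqref{OO4} then comes from the fact that each strand contributes two essential intersections with $c$. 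Note that primitivity plays no explicit role in this strand argument---the paper states the theorem for $\mathscr F_0$ and then observes separately (in Corollary~\ref{C1}) that $L(\mathscr F)=L(\mathscr F_0)$ via conjugacy; your speculation that primitivity is what prevents bigon cancellation is not how the argument runs.

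In short: keep the endgame with Theorem~\ref{TT}, but replace your two-curve/train-track setup with the strand-counting induction of Lemma~\ref{L2} applied to a single curve $c$.
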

Write $k=i_{\gamma}$ and let $\left\{P_1,P_2,\ldots, P_k\right\}$ be the set of self-intersection points of $\gamma$. Observe that via the point-pushing isotopy $\tilde{I}(t,\cdot)$ given by (\ref{EXT}),  $x_t=\tilde{I}(t,x)$ can travel along $\gamma$ as many times as possible, and on each loop, $x_t$ goes through each intersection point exactly twice.  

Let $c$ be a simple closed geodesic on $S$ that also defines a nontrivial geodesic (also identifies as $c$) on $\tilde{S}$. Since $\gamma$ is a filling closed geodesic on $\tilde{S}$, $\gamma$ must intersect $c$. This means that in the isotopy $\tilde{I}(t,\cdot)$, $0\leq t\leq 1$, the point 
$x_t$ does not carry any strands until it moves across the curve $c$, then $x_t$ starts capturing more and more strands as it passes through each intersection point $P_j$. Here by a strand we mean a portion of the image $\tilde{I}(t,c)$ of $c$ surrounding the point $x_t$ during the point-pushing process. As an illustration, Figure 2 below shows that $x_t$ at the moment $t$ carries 4 strands, although there may exist some other leaves (such as $c_1$ and $c_2$ in Figure 2) near $x_t$.  

%%%%%%%% 
%FIGURE 2

\bigskip
\smallskip

%TeXCAD Picture [figure 0001.pic]. Options:
%\grade{\on}
%\emlines{\off}
%\epic{\off}
%\beziermacro{\on}
%\reduce{\on}
%\snapping{\off}
%\pvinsert{% Your \input, \def, etc. here}
%\quality{8.000}
%\graddiff{0.005}
%\snapasp{1}
%\zoom{4.0000}
\unitlength 1mm % = 2.845pt
\linethickness{0.4pt}
\ifx\plotpoint\undefined\newsavebox{\plotpoint}\fi % GNUPLOT compatibility
\begin{picture}(92,33.25)(0,0)
\put(42.75,33.25){\line(1,0){47.25}}
\put(42.75,32.25){\line(1,0){47.25}}
\put(42.75,10){\line(1,0){47}}
\put(42.75,9){\line(1,0){47}}
\put(45,27){\line(1,0){38.75}}
\put(83.75,27){\line(0,-1){13}}
\put(45.25,25.75){\line(1,0){37.5}}
\put(82.75,25.75){\line(0,-1){10.75}}
\put(83.75,14.25){\line(-1,0){39}}
\put(82.75,15.5){\line(-1,0){38}}
\put(45.25,24.75){\line(1,0){36.5}}
\put(81.75,24.75){\line(0,-1){8}}
\put(81.75,16.75){\line(-1,0){36.75}}
\put(80.75,23.75){\line(0,-1){6}}
\put(80.75,17.75){\line(-1,0){35.75}}
\put(88.25,30){\makebox(0,0)[cc]{$c_1$}}
\put(87.5,12){\makebox(0,0)[cc]{$c_2$}}
\put(80.75,24){\line(-1,0){35.75}}
\put(42,21){\vector(1,0){50}}
\put(77.25,21){\circle*{1.803}}
\put(74.75,19.5){\makebox(0,0)[cc]{$x_t$}}
\put(89.75,18.25){\makebox(0,0)[cc]{$\gamma$}}
\put(66.25,3.25){\makebox(0,0)[cc]{Figure 2}}
\end{picture}
 
Throughout the article, $f^q(c)$ denotes the geodesic representative in the homotopy class of the image curve of $c$ under the map $f^q$ for an integer $q$. Note that the point-pushing process gives rise to a deformation of the curve $c$. It is clear that $f^q(c)$ is homotopic to the deformation image of $c$ when $x_t$ completes its $q$-th cycle and returns to its original position. 

Let $\#\{f^q(c)\cap c\}$ denote the set of points of intersection between $f^q(c)$ and $c$.
We need to examine more carefully the points of intersection in $\#\{f^q(c)\cap c\}$ when $q$ is large.  Recall that $i(f^q(c),c)$ is the geometric intersection number of $f^q(c)$ and $c$. 

Since our aim is to seek the least value of the intersection number $i(f^{m+1}(c),c)$, the worst scenario is that $c$ intersects $\gamma$ exactly once and in addition, during the first pass through $\gamma$, the point $x_t$ moves across $c$ right before $x_t$ returns to its original position. In this case, $x_t$ does not carry any strands most the time in the first pass, and captures only one strand right before $x_t$ returns to its original position. It follows that $x_t$ carries only one strand at the end of the first trip through $\gamma$. 

As a matter of fact, $f(c)$ and $c$ are disjoint and isotopic to each other as $x$ is filled in. In other words, $\{f(c),c\}$ are boundary components of an once punctured cylinder. It turns out that
\begin{equation}\label{BN}
i(f(c),c)=0.
\end{equation}
We need the following result.
\begin{lem}\label{L2}
Let $f\in \mathscr{F}_0$ be determined by a primitive filling closed geodesic $\gamma$ on $\tilde{S}$. Then for any integer $m\geq 1$, 
\begin{equation}\label{OO4}
i(f^{1+m}(c),c)\geq 2\ \sum_{j=1}^m\left( 1+2 i_{\gamma}  \right)^j.
\end{equation}
\end{lem}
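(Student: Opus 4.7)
The plan is to argue by induction on $m$, following closely the picture set up in the paragraphs preceding the lemma. Write $k=i_\gamma$, and reduce to the worst-case configuration already isolated by the author: the simple closed geodesic $c$ meets $\gamma$ transversely in a single point, and this crossing occurs at the very end of each loop of $x_t$ around $\gamma$, so that $x_t$ completes its first loop carrying exactly one strand (matching $i(f(c),c)=0$ from \eqref{BN}). Under this configuration, any further strand growth is attributable solely to the self-intersections of $\gamma$, and the base case $m=1$ will be verified by direct inspection to give $i(f^2(c),c)\geq 2(1+2k)$.

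The central auxiliary quantity is the strand count $s_j$, defined as the minimum number of arcs of $\tilde{I}(j,c)$ (in minimal position with respect to $c$) that encircle $x_t$ the instant $x_t$ completes its $j$-th loop around $\gamma$. The plan is to prove, by a parallel induction on $j$, the geometric growth estimate
\[
s_j \geq (1+2k)^{j-1},\qquad j\geq 1.
\]
The mechanism is local and identical on each loop: during a single loop of $x_t$ around $\gamma$ the point passes each of the $k$ self-intersection points $P_1,\ldots,P_k$ exactly twice, and a small-neighborhood analysis at $P_i$ shows that each such passage replicates the bundle of strands currently surrounding $x_t$, adding $s$ new strands when $s$ are already present. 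Summing the $2k$ contributions produced in one loop yields $s_{j+1}\geq (1+2k)s_j$, from which the claimed bound on $s_j$ follows.

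With the strand estimate in hand, the induction step for the lemma reduces to counting new intersections with $c$ produced during the $(m+1)$-th loop of $x_t$. Each of the $s_m\geq (1+2k)^{m-1}$ strands dragged along $\gamma$ during this loop is essentially a parallel translate of $\gamma$ and, by the same local analysis at every self-intersection, contributes $2(1+2k)$ intersections with $c$ beyond those already counted by $i(f^m(c),c)$. Hence
\[
i(f^{m+1}(c),c)-i(f^m(c),c)\ \geq\ 2(1+2k)^m,
\]
and telescoping from the base case $i(f(c),c)=0$ delivers the inequality \eqref{OO4}.

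The principal obstacle is the local bookkeeping at a self-intersection $P_i$: one must verify rigorously that each passage of $x_t$ through $P_i$ carrying $s$ strands produces exactly $s$ new strands once the image of $c$ is isotoped to minimal position, rather than fewer. Here the hypotheses that $\gamma$ is \emph{primitive} and \emph{filling} enter essentially, since they guarantee that the subarcs of $\gamma$ between consecutive self-intersection passages are essential in the complement of $c$, ruling out hidden bigon cancellations that would deflate the strand count. Once this local replication count is justified, the global inequality follows by combining it with the telescoping argument above.
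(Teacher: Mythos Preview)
Your overall architecture --- track the strand count $s_j$ after the $j$-th loop, prove $s_{j+1}\geq(1+2k)s_j$ by a local count at self-intersections, then telescope --- matches the paper's. The gap is precisely in the local step, and it is not merely an unfinished detail: the mechanism you describe is wrong.

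You assert that \emph{each} of the $2k$ passages of $x_t$ through a self-intersection $P_i$ ``replicates the bundle,'' adding $s$ new strands, and then sum $2k$ contributions of size $s_j$ to get the factor $1+2k$. But the first passage through $P_i$ in a given loop does \emph{not} add strands: the transverse branch of $\gamma$ at $P_i$ carries no leaves yet (within that loop), so $x_t$ crosses it cleanly. What the first passage does is \emph{deposit} $2\mu_m(t_i^1)$ leaves near $P_i$, parallel to the direction of travel (the paper's Figures~3--4). These leaves are then \emph{captured} at the second passage, when $x_t$ returns along the transverse branch (Figures~5--6). The correct local inequality is therefore
\[
\mu_m^+(t_i^2)\ \geq\ \mu_m^-(t_i^2) + 2\,\mu_m(t_i^1),
\]
i.e.\ the gain at the \emph{second} visit is at least twice the strand count at the \emph{first} visit. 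Since $\mu_m(t_i^1)\geq s$ for every $i$, summing over the $k$ second-passages gives a total gain of at least $2ks$, hence $s(1+2k)$ at the end of the loop --- the same factor you want, but obtained from $k$ contributions of $2s$ rather than $2k$ contributions of $s$.

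Your version also has an internal inconsistency: if ``adding $s$ new strands when $s$ are already present'' means doubling the \emph{current} bundle at each passage, then $2k$ passages give $2^{2k}s_j$, not $(1+2k)s_j$; if instead each passage is supposed to add the \emph{initial} count $s_j$, you have not explained why the first passage contributes anything at all. Once you replace your local claim with the first/second-passage mechanism above, the rest of your induction and telescoping goes through exactly as in the paper.
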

\begin{proof}
Suppose at the beginning of the $m$-th pass through $\gamma$, $m\geq 1$, the point $x_t$ carries $s$ strands. Write 
$$
P_j=\gamma(t_j^1)=\gamma(t_j^2)
$$
for $1\leq j\leq k$. Obviously, for each $j$ we have $0<t_j^1<t_j^2<1$.

Denote by $\mu_m(t)$ the number of strands carried by $x_t$ during the $m$-th trip through $\gamma$ at time $t$. It is obvious that $\mu_m(t)$ is a nondecreasing function on $[0,1]$ with non-negative integer values. Let 
$$
\mu_m^+(t)=\mu_m(t+\varepsilon)\ \  \mbox{and}\ \  \mu_m^-(t)=\mu_m(t-\varepsilon)
$$ 
for a small positive number $\varepsilon$. We claim that for every $j$, $1\leq j\leq k$,
\begin{equation}\label{P0}
\mu_m^+(t_j^2)\geq \mu_m^-(t_j^2)+2\ \mu_m(t_j^1).
\end{equation}
Indeed, as $x_t$ travels through the intersection point $P_j=\gamma(t_{j}^1)$, the point $x_t$ carries at least $s$ strands (Figure 3 shows that $x_t$ carries $s=2$ strands and tends to travel through $P_j$). Then after going through $P_j=\gamma(t_{j}^1)$, at least $2s$ leaves near $P_j$ have been recorded as shown in Figure 4.

\bigskip

%TeXCAD Picture [figure 00034.pic]. Options:
%\grade{\on}
%\emlines{\off}
%\epic{\off}
%\beziermacro{\on}
%\reduce{\on}
%\snapping{\off}
%\pvinsert{% Your \input, \def, etc. here}
%\quality{8.000}
%\graddiff{0.005}
%\snapasp{1}
%\zoom{4.0000}
\unitlength 1mm % = 2.845pt
\linethickness{0.4pt}
\ifx\plotpoint\undefined\newsavebox{\plotpoint}\fi % GNUPLOT compatibility
\begin{picture}(116,29.25)(0,0)
\put(20.75,22.25){\line(1,0){24}}
\put(44.75,22.25){\line(0,-1){7.5}}
\put(44.75,14.75){\line(-1,0){24.25}}
\put(20.75,21.5){\line(1,0){23.5}}
\put(44.25,21.5){\line(0,-1){6}}
\put(44.25,15.5){\line(-1,0){23.75}}
\put(19.25,18.25){\line(1,0){37}}
\put(48.25,29.25){\line(0,-1){19.25}}
\put(41.25,18){\circle*{1.118}}
\put(39.5,20){\makebox(0,0)[cc]{$x_t$}}
\put(50.25,16){\makebox(0,0)[cc]{$P_j$}}
\put(50.75,27){\makebox(0,0)[cc]{$\gamma$}}
\put(38.75,5.75){\makebox(0,0)[cc]{Figure 3}}
\put(79,18.25){\line(1,0){37}}
\put(96.75,28){\line(0,-1){17.5}}
\put(81,22.5){\line(1,0){33.75}}
\put(81,23.25){\line(1,0){33.75}}
\put(81.5,14){\line(1,0){33.25}}
\put(81.5,13.25){\line(1,0){33.25}}
\put(99,16.25){\makebox(0,0)[cc]{$P_j$}}
\put(99.25,26.75){\makebox(0,0)[cc]{$\gamma$}}
\put(113.25,16){\makebox(0,0)[cc]{$\gamma$}}
\put(98.5,6.25){\makebox(0,0)[cc]{Figure 4}}
\end{picture}

Now the point $x_t$ continues to travel along $\gamma$. Right before $x_t$ returns to $P_j$, that is, when $t=t_j^2-\varepsilon$, $x_t$ carries $\mu_m^-(t_{j}^2)$ strands (Figure 5 shows the situation that $x_t$ currently carries 4 strands and tends to travel through $P_j=\gamma(t_j^2)$). 

Observe that $x_t$, $t_j^2\leq t<t_j^2+\varepsilon$, captures at least $2s$ more strands than it does when $t_j^2-\varepsilon < t < t_j^2$. This means that $x_t$ carries at least $2s+\mu_m^-(t_j^2)$ strands when $t_j^2\leq t<t_{j}^2+\varepsilon$. (Figure 6 depicts the situation that $x_t$ has just passed through $P_j=\gamma(t_{j}^2)$ and captures those strands obtained from Figure 4). We conclude that (\ref{P0}) is satisfied. 

\bigskip

%TeXCAD Picture [figure 00056.pic]. Options:
%\grade{\on}
%\emlines{\off}
%\epic{\off}
%\beziermacro{\on}
%\reduce{\on}
%\snapping{\off}
%\pvinsert{% Your \input, \def, etc. here}
%\quality{8.000}
%\graddiff{0.005}
%\snapasp{1}
%\zoom{4.0000}
\unitlength 1mm % = 2.845pt
\linethickness{0.4pt}
\ifx\plotpoint\undefined\newsavebox{\plotpoint}\fi % GNUPLOT compatibility
\begin{picture}(120.25,38.75)(0,0)
\put(17.75,32){\line(1,0){39}}
\put(35.75,38.25){\line(0,-1){26}}
\put(38,29.25){\makebox(0,0)[cc]{$P_j$}}
\put(18.5,27.75){\line(1,0){37.75}}
\put(18.5,27){\line(1,0){37.75}}
\put(18.5,35.25){\line(1,0){38}}
\put(18.5,35.75){\line(1,0){38}}
\put(32.25,13){\line(0,1){11.5}}
\put(32.25,24.5){\line(1,0){7.25}}
\put(39.5,24.5){\line(0,-1){11.75}}
\put(31.25,13){\line(0,1){12.25}}
\put(31.25,25.25){\line(1,0){9.25}}
\put(40.5,25.25){\line(0,-1){12.5}}
\put(30.25,13){\line(0,1){13}}
\put(30.25,26){\line(1,0){11.25}}
\put(41.5,26){\line(0,-1){13.25}}
\put(33,13){\line(0,1){11}}
\put(33,24){\line(1,0){6}}
\put(39,24){\line(0,-1){11.25}}
\put(35.75,21.5){\circle*{1}}
\put(37.5,18.5){\makebox(0,0)[cc]{$x_t$}}
\put(36.25,5.25){\makebox(0,0)[cc]{Figure 5}}
\put(54,29.75){\makebox(0,0)[cc]{$\gamma$}}
\put(92.25,12.5){\line(0,1){14.5}}
\put(92.25,27){\line(1,0){6}}
\put(98.25,27){\line(0,-1){15}}
\put(91.5,12.25){\line(0,1){15.5}}
\put(91.5,27.75){\line(1,0){7.5}}
\put(99,27.75){\line(0,-1){15.75}}
\put(90.75,12.25){\line(0,1){16.25}}
\put(90.75,28.5){\line(1,0){9}}
\put(99.75,28.5){\line(0,-1){16.5}}
\put(90,12.25){\line(0,1){17}}
\put(90,29.25){\line(1,0){10.5}}
\put(100.5,29.25){\line(0,-1){17.25}}
\put(74.25,14.25){\line(1,0){13.75}}
\put(88,14.25){\line(0,1){17}}
\put(88,31.25){\line(1,0){14.5}}
\put(102.5,31.25){\line(0,-1){17.25}}
\put(102.5,14){\line(1,0){13.25}}
\put(74.25,15){\line(1,0){13}}
\put(87.25,15){\line(0,1){17}}
\put(87.25,32){\line(1,0){16.25}}
\put(103.5,32){\line(0,-1){17}}
\put(103.5,15){\line(1,0){12.5}}
\put(74.25,17){\line(1,0){11}}
\put(85.25,17){\line(0,1){18}}
\put(85.25,35){\line(1,0){20.5}}
\put(105.75,35){\line(0,-1){18}}
\put(105.75,17){\line(1,0){10.25}}
\put(73.75,17.75){\line(1,0){10.75}}
\put(84.5,17.75){\line(0,1){18}}
\put(84.5,35.75){\line(1,0){22}}
\put(106.5,35.75){\line(0,-1){18}}
\put(106.5,18){\line(0,-1){.25}}
\put(106.5,17.75){\line(1,0){9.5}}
\put(95.5,38){\line(0,-1){26.75}}
\put(95.5,24){\circle*{1.118}}
\put(98.25,38.75){\makebox(0,0)[cc]{$\gamma$}}
\put(95.5,5.5){\makebox(0,0)[cc]{Figure 6}}
\put(94.5,21.75){\makebox(0,0)[cc]{$x_t$}}
\put(118,17.25){\makebox(0,0)[cc]{$\gamma$}}
\put(93.75,17){\makebox(0,0)[cc]{$P_j$}}
\put(72.25,16){\line(1,0){48}}
\end{picture}

Let $k_0$ be the smallest index such that 
$$
t_{k_0}^2=\mbox{min }\{t_j^2; \  \mbox{for} \ 1\leq j\leq k\}.
$$
It is clear that  $\mu_m^+(t_{k_0}^2)\geq s+2s$. From (\ref{P0}), we see that for $j\neq k_0$, 
$$
\mu_m^+(t_j^2)-\mu_m^-(t_j^2)\geq 2s.
$$
It follows that when $x_t$ completes its $m$-th cycle,  the total number of strands carried by $x_t$ is larger than 
\begin{equation}\label{P1}
2s(k-1)+(2s+s)=s(2k+1)=s(2i_{\gamma}+1). 
\end{equation}
By induction hypothesis, suppose (\ref{OO4}) holds for an integer $m\geq 1$. According to (\ref{P1}), at the beginning of the $(m+1)$-th cycle, $x_t$ carries at least $(1+2i_{\gamma})^m$ strands. At the end of $(m+1)$-th cycle, $x_t$ carries at least  
$$
(1+2i_{\gamma})^m(1+2i_{\gamma})=(1+2i_{\gamma})^{m+1}
$$ 
strands. Since each strand contributes two points of intersection in $\#\{f^{1+(m+1)}(c)\cap c\}$, we conclude that 
\begin{equation}\label{OO5}
i(f^{1+(m+1)}(c),c)\geq i(f^{1+m}(c),c)+2(1+2i_{\gamma})^{m+1}\geq 2\ \sum_{j=1}^{m+1}\left( 1+2i_{\gamma}  \right)^j. 
\end{equation}
This proves the lemma. 
\end{proof}

\noindent {\em Proof of Theorem $\ref{T1}$: } Let $\alpha=1+2 i_{\gamma}$. From Lemma \ref{L2}, we obtain
\begin{equation}\label{F0}
i(f^{1+m}(c),c)\geq 2\alpha\ \frac{\alpha^{m}-1}{\alpha -1},
\end{equation} 
which implies that
$$
\frac{i(f^{1+m}(c),c)}{\lambda(f)^m}\geq \frac{2\alpha}{\lambda(f)^m}\frac{\alpha^{m}-1}{\alpha - 1}.
$$
Therefore,
\begin{equation}\label{C0}
\frac{i(f^{1+m}(c),c)}{\lambda(f)^m}\geq \frac{2\alpha}{\alpha-1}\left\{\left(\frac{\alpha}{\lambda(f)}   \right)^m - \frac{1}{\lambda(f)^m}\right\}.
\end{equation}
Setting $b=f(c)$, the left hand side of (\ref{C0}) becomes $i(f^m(b),c)/\lambda(f)^m$. From Theorem \ref{TT}, we get 
\begin{equation}\label{CC1}
\frac{i(f^m(b),c)}{\lambda(f)^m}\rightarrow \kappa \  \ \mbox{as}\ m\rightarrow +\infty,
\end{equation}
where $\kappa\in (0,+\infty)$.  Since $\lambda(f)>1$, $1/\lambda(f)^m\rightarrow 0$ as $m\rightarrow +\infty$. Now from (\ref{C0}) and (\ref{CC1}) it follows that 
$\alpha/\lambda(f)\leq 1$, which says that 
$$
\lambda(f)\geq \alpha = 1+2  i_{\gamma}.
$$
This proves Theorem 3.1. \ \ \ \ \ \ \ \ \ \ \ \ \ \ \ \ \ \ \ \ \ \ \ \ \ \ \ \ \ \ \ \ \ \ \ \ \ \ \ \ \ \ \ \ \ \ \ \ \ \ \ \ \ \ \ \ \ \ \ \ \ \ \ \ \ \ \ \ \ \ \ \ \ \ \ \ \ \ \ \ \ \ \ \ \ \ \ \ \ $\Box$

\medskip

As a direct consequence of Theorem \ref{T1}, we have the following corollary:
\begin{cor}\label{C1}
Let $S$ be a Riemann surface of type $(p,n)$. Suppose that $3p+n>3$ and $n\geq 1$. Then 
$L(\mathscr{F})\geq \mbox{log}\left( 4p+2n-5 \right)$ if $n>1$, and 
$L(\mathscr{F})\geq \mbox{log}\left( 4p-1 \right)$ if $n=1$.
\end{cor}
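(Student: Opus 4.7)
The plan is to combine Theorem \ref{T1} with a topological lower bound on the self-intersection number of any primitive filling closed geodesic on $\tilde{S}=S\cup\{x\}$.

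The first step is to reduce the estimate on $L(\mathscr{F})$ to an estimate over $\mathscr{F}_0$. As already observed at the beginning of this section, every $f\in\mathscr{F}$ is conjugate in $\pi_1(\tilde{S})$ to a power $f_0^q$ of some $f_0\in\mathscr{F}_0$, so $\lambda(f)=\lambda(f_0)^q\geq\lambda(f_0)$. Thus it suffices to prove $\lambda(f_0)\geq 4p+2n-5$ (resp.\ $\lambda(f_0)\geq 4p-1$) for every $f_0\in\mathscr{F}_0$.

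The second step is to bound the self-intersection number $k=i_\gamma$ of the primitive filling geodesic $\gamma\subset\tilde{S}$ associated to $f_0$. Regarding $\gamma$ as a $4$-valent graph $\Gamma\subset\tilde{S}$ with $k$ vertices and $2k$ edges, the filling property says $\tilde{S}\setminus\gamma$ is a disjoint union of $F_1$ open disks and $F_2$ open once-punctured disks, where each puncture of $\tilde{S}$ lies in precisely one punctured component. Hence $F_2=n-1$ if $n\geq 2$, while $F_2=0$ if $n=1$. Additivity of Euler characteristic gives
\[
\chi(\tilde{S})=\chi(\Gamma)+\chi(\tilde{S}\setminus\gamma)=-k+F_1,
\]
and $\chi(\tilde{S})=(2-2p)-(n-1)=3-2p-n$ yields $F_1=k+3-2p-n$.

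Finally, for $n\geq 2$ the bound $F_1\geq 0$ gives $k\geq 2p+n-3$, so Theorem \ref{T1} produces $\lambda(f_0)\geq 1+2k\geq 4p+2n-5$. For $n=1$ we have $F_2=0$, so $F_1\geq 1$ (the complement of any curve in a closed surface is nonempty) gives $k\geq 2p-1$ and hence $\lambda(f_0)\geq 4p-1$. The only step that needs care is the Euler-characteristic bookkeeping: one must use that an open disk contributes $1$ while an open once-punctured disk contributes $0$, together with the observation forced by the definition of ``filling'' in Section 2 that each puncture of $\tilde{S}$ occupies exactly one punctured complementary region. Everything else follows immediately from Theorem \ref{T1}.
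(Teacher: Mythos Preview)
Your proof is correct and follows essentially the same route as the paper: reduce to $\mathscr{F}_0$ via conjugacy and powers, then bound $i_\gamma$ by an Euler-characteristic count on the $4$-valent graph $\gamma$, and finish with Theorem~\ref{T1}. The only cosmetic difference is that the paper computes $V-E+F=2-2p$ on the closed genus-$p$ surface obtained by filling in the punctures (so $F\geq n-1$ when $n\geq 2$), whereas you compute on the punctured surface $\tilde{S}$ using additivity and split $F=F_1+F_2$; the two computations are equivalent and yield the same bound $i_\gamma\geq 2p+n-3$ (resp.\ $i_\gamma\geq 2p-1$).
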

\begin{proof}
We first prove that $L(\mathscr{F}_0)\geq \mbox{log}\left( 4p+2n-5 \right)$ if $n>1$, and 
$L(\mathscr{F}_0)\geq \mbox{log}\left( 4p-1 \right)$ if $n=1$.

Choose $f\in \mathscr{F}_0$, and let $f$ be determined by a primitive filling closed geodesic $\gamma$.  The curve $\gamma$ can be thought of as a 4-valence graph on $\tilde{S}$. By Euler characteristic, we obtain 
$$
2-2p=V+F-E,
$$
where $V=i_{\gamma}$ is the number of vertices, $E$ is the number of edges, and $F$ is the number of faces. In our situation, $2V=E$. Note that $\gamma$ is on the surface $S\cup \{x\}$ which is of type $(p,n-1)$.  We see that $F\geq n-1$ if $n\geq 2$, and $F\geq 1$ if  $n=1$.  

 It turns out that $2-2p\geq -i_{\gamma}+n$ if $n\geq 2$; and $2-2p\geq -i_{\gamma}+1$ if $n=1$. Hence 
$i_{\gamma}\geq 2p-3+n$ if $n\geq 2$, and $i_{\gamma}\geq 2p-1$ 
if $n=1$. It follows from Theorem 3.1 that 
$$
\lambda(f)\geq 1+2i_{\gamma}\geq 1+2(2p-3+n)=4p+2n-5
$$
if $n\geq 2$, and 
$$
\lambda(f)\geq 1+2i_{\gamma}\geq 1+2(2p-1)=4p-1
$$
if $n=1$. So the corollary is proved when $f\in \mathscr{F}_0$. 

For the general case, we notice that $\mathscr{F}_0\subset \mathscr{F}$. As such $L(\mathscr{F})\leq L(\mathscr{F}_0)$. On the other hand, for any element $f\in \mathscr{F}$, there is an element $f_0\in \mathscr{F}_0$, an integer $m\geq 1$, and an element $g\in G$ such that
\begin{equation}\label{BV}
f=g^*\circ f_0^m \circ (g^*)^{-1}, 
\end{equation}
where $g^*\in \mbox{Mod}_S^x$ is the corresponding element of $g$ under the Bers isomorphism \cite{Bers1}. Clearly, from (\ref{BV}) we obtain
$$ 
\lambda(f)=\lambda(f_0^m)=\lambda(f_0)^m.
$$
Since $m\geq 1$ and $\lambda(f_0)>1$, we conclude that $\lambda(f)\geq \lambda(f_0)$. Therefore, $L(\mathscr{F})=L(\mathscr{F}_0)$, and so the corollary still holds for the general case. 
\end{proof}

\medskip

As a special case of Corollary \ref{C1},  Theorem \ref{T2} in lower bound case is proved. 

\section{An upper bound for the least dilatations of certain pseudo-Anosov maps on punctured spheres }
\setcounter{equation}{0}

On the sphere $S_{n-1}=\mathbf{S}^2\backslash \{x_1,\ldots, x_{n-1}\}$, $n\geq 4$, there is a primitive filling closed geodesic $\gamma$ drawn in Figure 7.  

%%%%%
%FIGURE

%TeXCAD Picture [figure 0007.pic]. Options:
%\grade{\on}
%\emlines{\off}
%\epic{\off}
%\beziermacro{\on}
%\reduce{\on}
%\snapping{\off}
%\pvinsert{% Your \input, \def, etc. here}
%\quality{8.000}
%\graddiff{0.005}
%\snapasp{1}
%\zoom{4.0000}
\unitlength 1mm % = 2.845pt
\linethickness{0.4pt}
\ifx\plotpoint\undefined\newsavebox{\plotpoint}\fi % GNUPLOT compatibility
\begin{picture}(122.75,42.875)(0,0)
\put(24.75,24.125){\oval(24,2.25)[]}
\put(46,24){\circle*{.5}}
\put(64,24){\circle*{.5}}
\put(87.75,24){\circle*{.5}}
\put(103.75,24){\circle*{.5}}
\qbezier(24.75,24)(35.125,5.875)(46,24.25)
\qbezier(24.75,24)(35.25,42.875)(45.75,24.25)
\qbezier(45.75,24.25)(55.125,42.75)(64,24.25)
\qbezier(45.75,24.25)(55.25,6.375)(63.75,24)
\qbezier(87.5,24)(95.5,41.875)(103.5,24.25)
\qbezier(87.5,24.25)(95.5,6.875)(103.5,24)
\qbezier(103.5,24.25)(111.625,41.25)(119.25,24.25)
\qbezier(103.5,24.25)(111.625,7.375)(119.25,24)
\qbezier(64,24.25)(65.5,29.875)(71,33)
\qbezier(64,24.25)(65.5,17.125)(71,15.5)
\qbezier(87.5,24)(86.5,30)(81.5,33)
\qbezier(87.5,24)(86,18)(81.5,16)
%\dottedline(72.5,23.75)(80.25,23.75)
\multiput(72.43,23.68)(.96875,0){9}{{\rule{.4pt}{.4pt}}}
%\end
\put(35.25,24){\circle*{.5}}
\put(55.25,24){\circle*{1}}
\put(95.5,24){\circle*{1.118}}
\put(111.5,23.75){\circle*{1}}
\put(14.5,24){\circle*{.707}}
\put(35.25,33.5){\circle*{1}}
\put(35.25,33.25){\circle*{1}}
\put(35.25,21){\makebox(0,0)[cc]{$x_1$}}
\put(55.25,21.25){\makebox(0,0)[cc]{$x_2$}}
\put(35,30.5){\makebox(0,0)[cc]{$x_0$}}
\put(48.5,24){\makebox(0,0)[cc]{$P_1$}}
\put(66.75,24){\makebox(0,0)[cc]{$P_2$}}
\put(83.5,24){\makebox(0,0)[cc]{$P_{k-1}$}}
\put(100.25,24){\makebox(0,0)[cc]{$P_k$}}
%\emline(42.75,29.75)(43.25,28)
\multiput(42.75,29.75)(.0333333,-.1166667){15}{\line(0,-1){.1166667}}
%\end
%\emline(43.25,28)(42,28.75)
\multiput(43.25,28)(-.0543478,.0326087){23}{\line(-1,0){.0543478}}
%\end
%\emline(54.5,15.75)(56,15.25)
\multiput(54.5,15.75)(.1,-.0333333){15}{\line(1,0){.1}}
%\end
%\emline(56,15.25)(54.5,14.75)
\multiput(56,15.25)(-.1,-.0333333){15}{\line(-1,0){.1}}
%\end
\put(54.5,14.75){\line(-1,0){.25}}
%\emline(69,32.5)(70.75,32.75)
\multiput(69,32.5)(.21875,.03125){8}{\line(1,0){.21875}}
%\end
%\emline(70.75,32.75)(69.75,31.25)
\multiput(70.75,32.75)(-.0333333,-.05){30}{\line(0,-1){.05}}
%\end
%\emline(95.25,16.25)(96.75,15.5)
\multiput(95.25,16.25)(.0652174,-.0326087){23}{\line(1,0){.0652174}}
%\end
%\emline(96.75,15.5)(95.25,14.75)
\multiput(96.75,15.5)(-.0652174,-.0326087){23}{\line(-1,0){.0652174}}
%\end
%\emline(111.25,33)(113,32.25)
\multiput(111.25,33)(.076087,-.0326087){23}{\line(1,0){.076087}}
%\end
%\emline(113,32.25)(111.25,32)
\multiput(113,32.25)(-.21875,-.03125){8}{\line(-1,0){.21875}}
%\end
%\emline(41.25,18.75)(40.75,17.5)
\multiput(41.25,18.75)(-.0333333,-.0833333){15}{\line(0,-1){.0833333}}
%\end
\put(40.75,17.5){\line(1,0){1}}
%\emline(55.25,34)(53.75,33)
\multiput(55.25,34)(-.05,-.0333333){30}{\line(-1,0){.05}}
%\end
%\emline(53.75,33)(55.25,32.5)
\multiput(53.75,33)(.1,-.0333333){15}{\line(1,0){.1}}
%\end
%\emline(69.75,16.75)(67.75,17)
\multiput(69.75,16.75)(-.25,.03125){8}{\line(-1,0){.25}}
%\end
%\emline(67.75,17)(69,15.5)
\multiput(67.75,17)(.03289474,-.03947368){38}{\line(0,-1){.03947368}}
%\end
%\emline(82.75,17.25)(81.5,15.75)
\multiput(82.75,17.25)(-.03289474,-.03947368){38}{\line(0,-1){.03947368}}
%\end
%\emline(81.5,15.75)(83.5,16.25)
\multiput(81.5,15.75)(.1333333,.0333333){15}{\line(1,0){.1333333}}
%\end
%\emline(95.75,33.5)(94,32.5)
\multiput(95.75,33.5)(-.0583333,-.0333333){30}{\line(-1,0){.0583333}}
%\end
%\emline(94,32.5)(96,32)
\multiput(94,32.5)(.1333333,-.0333333){15}{\line(1,0){.1333333}}
%\end
%\emline(112.5,16.5)(110.75,15.75)
\multiput(112.5,16.5)(-.076087,-.0326087){23}{\line(-1,0){.076087}}
%\end
%\emline(110.75,15.75)(112.25,15.25)
\multiput(110.75,15.75)(.1,-.0333333){15}{\line(1,0){.1}}
%\end
\put(21.25,27.25){\makebox(0,0)[cc]{$c$}}
\put(28.25,15.5){\makebox(0,0)[cc]{$\gamma$}}
\put(69.5,6.25){\makebox(0,0)[cc]{Figure 7}}
\put(14.25,20.75){\makebox(0,0)[cc]{$x_{n-1}$}}
\put(111.25,20.75){\makebox(0,0)[cc]{$x_{n-2}$}}
\put(95.25,20.75){\makebox(0,0)[cc]{$x_{n-3}$}}
\put(35.25,24){\circle*{.5}}
\put(35.25,24){\circle*{.707}}
\put(119.25,24){\circle*{1.414}}
\put(119.25,24){\circle*{1.803}}
\put(122.75,24.5){\makebox(0,0)[cc]{$Z$}}
\end{picture}

Note that the pseudo-Anosov map $f$ arising from $\gamma$ does not depend on the choice of the base point $x=x_0$ on $\gamma$. Thus we may assume without loss of generality that the starting point $x=x_0$ is as shown in Figure 7, and the point-pushing isotopy goes through each intersection point in the following order: 
$P_1\rightarrow P_2\rightarrow \ldots, \rightarrow P_k\rightarrow  P_k\rightarrow P_{k-1}\rightarrow  \ldots \rightarrow P_1$ (where $k=i_{\gamma}=n-3$). In other words, if we write
$P_j=\gamma(t_j^1)=\gamma(t_j^2)$ for $1\leq j\leq k$, then 
\begin{equation}\label{ORDER}
0=t_0^1 < t_1^1 < \cdots  <  t_k^1  <  t_k^2 <  \cdots  <  t_1^2 <  t_0^2 =1. 
\end{equation}
The aim of this section is to give the following estimation for $\lambda(f)$:
\begin{thm}\label{T5}
Let $p=0$ and $n\geq 4$. Let $f\in \mathscr{F}_0$ be determined by a primitive filling closed geodesic $\gamma$ that is drawn in Figure $7$. Then 
\begin{equation}\label{Y7}
\lambda(f)\leq 2i_{\gamma}^2+6i_{\gamma}+3= 2n^2-6n+3.
\end{equation}
\end{thm}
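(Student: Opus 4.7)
\medskip

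\noindent \textbf{Proof plan.} The approach mirrors that of Section 3: apply Theorem \ref{TT} to the simple closed geodesic $c$ drawn in Figure 7, bounding $\lambda(f)$ from above by bounding the growth of the intersection numbers $i(f^m(c),c)$ as $m\to\infty$.

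First I would note that the curve $c$ shown in Figure 7 has a small intersection with $\gamma$, so that the initial strand count $\mu_1(0)$ is bounded by an absolute constant independent of $n$. I would parametrize $\gamma$ so that the self-intersections are visited in the nested order (\ref{ORDER}), namely $0=t_0^1<t_1^1<\cdots<t_k^1<t_k^2<\cdots<t_1^2<t_0^2=1$ with $k=i_\gamma=n-3$, which is the natural parametrization associated to the chain-of-lenses structure of Figure 7.

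Next I would track $\mu_m(t)$ through one complete cycle of the point-pushing isotopy. Setting $s_m=\mu_m(0)$, the central claim to prove is $s_{m+1}\leq (2k^2+6k+3)\, s_m$ (up to lower-order additive terms). The analysis splits into two halves. During the first half $t\in[0,t_k^1]$ the point $x_t$ makes only first visits to $P_1,\ldots,P_k$; although no ``doubling'' across a revisit occurs yet, $x_t$ can still accumulate strands by crossing pieces of the deformed curve $\tilde I(t,c)$ persisting from earlier cycles, and I would show this first-half growth is at most a linear-in-$k$ multiple of $s_m$, yielding bounds on each $\mu_m(t_j^1)$. During the second half $t\in[t_k^2,1]$, the revisits to $P_k,\ldots,P_1$ in reverse order each contribute additional strands, which by the same local picture used to derive (\ref{P0}) (but now exploited as an equality for the explicit geometry of Figure 7) can be controlled from above by a fixed multiple of the corresponding $\mu_m(t_j^1)$. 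Summing and multiplying these two linear-in-$k$ contributions produces the claimed quadratic bound $2k^2+6k+3$.

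To conclude, each strand around $x_t$ contributes at most two points to $\#\{f^m(c)\cap c\}$, so iterating the one-cycle estimate gives $i(f^{m+1}(c),c)\leq 2s_{m+1}\leq 2(2k^2+6k+3)^m s_1$. Since Theorem \ref{TT} asserts that $i(f^m(c),c)/\lambda(f)^m$ converges to a positive limit as $m\to\infty$, we must have $\lambda(f)\leq 2k^2+6k+3=2n^2-6n+3$, completing the proof.

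The main obstacle is the precise strand-counting during the first half of each cycle. Unlike Lemma \ref{L2}, where only a lower bound on $\mu_m^+(t_j^2)$ was needed, here we must quantitatively control \emph{every} source of strand accumulation, including interactions between strands already present and the fresh crossings produced as $x_t$ sweeps out the first half of $\gamma$. The fact that this accumulation is only polynomial (in fact linear) in $k$ per half-cycle---rather than exponential, as could occur for a generic filling curve---relies critically on the explicit nested ordering (\ref{ORDER}) realized by the curve of Figure 7.
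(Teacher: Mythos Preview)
Your proposal is correct and follows essentially the same route as the paper: choose the curve $c$ of Figure~7, use the nested parametrization (\ref{ORDER}), establish the one-cycle recursion $\mu_{m+1}^+(t_0^2)\le(2k^2+6k+3)\,\mu_m^+(t_0^2)$ by bounding the first-half strand counts $\mu_{m+1}^+(t_j^1)$ linearly in $j$ (using the residual leaves $\mu_m^+(t_j^2)\le s$ left from the previous cycle) and then telescoping the second-half revisits, and finish with Theorem~\ref{TT}. Two small slips to fix when you write it up: in both your first-half and second-half descriptions the quantities being bounded are $\mu_{m+1}(t_j^1)$ and the second-half increments are controlled by $2\mu_{m+1}(t_j^1)$ (same cycle), not $\mu_m(t_j^1)$; and the paper makes the first-half mechanism precise via $\mu_{m+1}^+(t_j^1)\le\mu_{m+1}^+(t_{j-1}^1)+2\mu_m^+(t_j^2)\le(2j+1)s$, which is exactly the ``linear-in-$k$'' bound you anticipate.
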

To begin, let $c$ be a simple closed geodesic as shown in Figure 7. That is, $c$ encloses $x_1$ and $x_{n-1}$.  From Figure 7, we see that at the end of the first pass through $\gamma$, the number  of strands carried by $x_t$ is two. That is, 
\begin{equation}\label{GG}
\mu_1^+(t_0^2)=2.
\end{equation}
Suppose that $m\geq 1$ and that at the beginning of the $(m+1)$-th trip, $x_t$ carries $s$ strands. This means that when $x_t$ completes its $m$-th trip, $x_t$ carries $s$ strands. So we obtain 
\begin{equation}\label{Y0}
s\ =\ \mu_m^+(t_1^2)\ \geq \ \mu_m^+(t_2^2)\ \geq \ \cdots \ \geq \ \mu_m^+(t_k^2).
\end{equation} 
By the same argument as in (3.3), we see that 
\begin{equation}\label{YY0}
\mu_{m+1}^+(t_1^1)=s+2\mu_{m}^+(t_1^2)\leq 3s.
\end{equation}
From (\ref{Y0}), for $2\leq j\leq k$, we have
\begin{equation} \label{Y1}
\mu_{m+1}^+(t_j^1)\ \leq \ \mu_{m+1}^+(t_{j-1}^1)+2\ \mu_m^+(t_j^2)\ \leq \ (2j+1)s. 
\end{equation}
At this moment, $x_t$ arrives at the labeling point $Z$ on $\gamma$ (Figure 7) and tends to pass through  
each intersection point $P_j$ a second time in the following order: $P_k\rightarrow P_{k-1}\rightarrow  \cdots \cdots \rightarrow P_1$. Then $x_t$ returns to $x_0$. 
Obviously, 
\begin{equation}\label{PPP0}
\mu_{m+1}^+(t_k^2)\ \leq \ 3s(2k+1)=6sk+3s.
\end{equation}
Note that 
\begin{equation}\label{PPP1}
\mu_{m+1}^-(t_{k-j}^2)=\mu^+_{m+1}(t_{k-j+1}^2).
\end{equation}
Now (\ref{Y1}), (\ref{PPP0}), and (\ref{PPP1}) combine to yield 
\begin{eqnarray*}
\mu_{m+1}^+(t_{k-1}^2)\ \leq  \ \mu_{m+1}^+(t_k^2)+2\mu_{m+1}(t_{k-1}^1)\leq  \ \mu_{m+1}^+(t_k^2)+2\mu_{m+1}^+(t_{k-1}^1) \leq \\
\leq 3s\left(2k+1\right)+2\left(s+2(k-1)s\right) = 10sk+s,  \ \ \ \ \ \ \ \ \ \ \ \ \ \ \ \ \ \
\end{eqnarray*}
and in general, induction hypothesis shows that for each $1\leq j\leq k$,
\begin{eqnarray}\label{Y2}
\mu_{m+1}^+(t_{k-j}^2)\ \leq \ \mu_{m+1}^+(t_{k-j+1}^2)+2\ \mu_{m+1}^+(t_{k-j}^1)\leq \\
 \leq \ sk \left(6+4j)+s(3-2j^2\right). \ \ \ \ \ \ \ \ \ \ \nonumber
\end{eqnarray}
We see that as $x_t$ completes its $(m+1)$-th trip, 
$x_t$ carries $\mu_{m+1}^+(t_0^2)$  strands that satisfies the following inequality (obtained by setting $j=k$ in (4.8)):
\begin{equation}\label{Y3}
\mu_{m+1}^+(t_0^2)\ \leq \ sk (6+4k)+s (3-2k^2)=s (2k^2+6k+3). 
\end{equation}
But we know that $\mu_1^+(t_0^2)=2$. From (\ref{Y3}), we obtain 
\begin{eqnarray*}
\mu_2^+(t_0^2) \ \leq \ 2\ (2k^2+6k+3)^{\ },  \\ 
\mu_3^+(t_0^2) \ \leq \ 2\ (2k^2+6k+3)^2, \\
\cdots \cdots \cdots\ \ \ \ \ \ \ \ \ \ \ \ 
\end{eqnarray*}
At the end of $m$-th trip, we have
\begin{equation}\label{Y4} 
\mu_m^+(t_0^2)\leq 2\ (2k^2+6k+3)^{m-1}. 
\end{equation}
We have thus almost proved the following lemma. 
\begin{lem}
Let $p=0$ and let $f\in \mathscr{F}_0$ be determined by a primitive filling closed geodesic $\gamma$ as shown in Figure $7$, and suppose that $c$ is a boundary of a twice punctured disk enclosing $x_1$ and $x_{n-1}$ (shown as in Figure $7$ also). Then for any integer $m\geq	 0$, 
\begin{equation}\label{IN}
i(f^{1+m}(c),c)\leq 4\ \sum_{j=0}^m \left(2k^2+6k+3\right)^j,
\end{equation}
where $k=i_{\gamma}=n-3$. 
\end{lem}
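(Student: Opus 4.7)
The plan is to close the induction whose inductive step has already been carried out in equations (4.1)--(4.9). Two tasks remain: first, to package the strand-count recursion into the closed-form bound (4.10); second, to convert that strand-count bound into the intersection-number bound (4.11).

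For the first task, I would combine the base value $\mu_1^+(t_0^2)=2$ from (4.1) with the multiplicative recursion $\mu_{m+1}^+(t_0^2)\leq\mu_m^+(t_0^2)(2k^2+6k+3)$ obtained by setting $j=k$ in the backward count (4.8) (i.e., equation (4.9)). A direct induction on $m$ then yields
\[
\mu_m^+(t_0^2)\leq 2(2k^2+6k+3)^{m-1}\qquad\text{for every }m\geq 1,
\]
which is (4.10). No new calculation is needed here, since the two-stage forward/backward tracking of strands across the symmetric sequence $P_1,\ldots,P_k,P_k,\ldots,P_1$ has already been carried out in (4.3)--(4.8).

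For the second task, I would mirror the argument in the proof of Lemma 3.2 with all inequalities reversed. The key geometric fact is that each strand carried by $x_t$ at the end of a given cycle contributes at most two transversal intersections with $c$: because $c$ is a simple closed curve enclosing exactly the pair $\{x_1,x_{n-1}\}$ in Figure 7, a strand around $x_t$ meets the separating curve $c$ in at most two points. Together with the base case $i(f(c),c)=0$ from (3.2), this yields the additive recursion
\[
i(f^{1+(m+1)}(c),c)\leq i(f^{1+m}(c),c)+2\mu_{m+1}^+(t_0^2).
\]
Telescoping this recursion and substituting (4.10) gives
\[
i(f^{1+m}(c),c)\leq 2\sum_{j=1}^{m+1}\mu_j^+(t_0^2)\leq 4\sum_{j=0}^{m}(2k^2+6k+3)^j,
\]
which is (4.11).

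The main obstacle is justifying the additivity of strand contributions in the geometric intersection number. Since $i(\cdot,\cdot)$ is an infimum over isotopy classes, one must verify that strands accumulated in earlier cycles are not cancelled by isotopies that absorb later ones. This rests on the observation that each new strand created upon $x_t$ passing through a self-intersection $P_j$ of $\gamma$ represents a fresh loop around $\{x_1,x_{n-1}\}$ that is homotopically distinct from all previously accumulated strands, so the intersections with $c$ accumulate cycle by cycle and cannot be erased by ambient isotopy. This is the direct analogue, with inequalities reversed, of the geometric fact implicitly used in the proof of Lemma 3.2.
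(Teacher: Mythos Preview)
Your approach is essentially the same as the paper's: establish the strand bound (4.10) from the recursion (4.9) and the base value (4.1), then feed this into an additive intersection-number recursion and sum. The paper organizes the same two steps as a single induction on $m$, using $i(f^{1+m}(c),c)\leq i(f^{m}(c),c)+2\mu_{m+1}^+(t_0^2)$ together with the inductive hypothesis and (4.10).

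One correction: your invocation of (3.2) for the base case $i(f(c),c)=0$ is misplaced. Equation (3.2) was derived in Section~3 under the specific ``worst case'' hypothesis that $c$ meets $\gamma$ in a single point; for the curve $c$ of Figure~7 this hypothesis fails ($c$ meets $\gamma$ twice), and there is no reason to expect $i(f(c),c)=0$. The paper instead reads the base case $m=0$ directly from $\mu_1^+(t_0^2)=2$, which already gives $i(f(c),c)\leq 4$. Your telescoped sum $2\sum_{j=1}^{m+1}\mu_j^+(t_0^2)$ in fact contains the term $2\mu_1^+(t_0^2)=4$, so the arithmetic comes out right once you start the telescope at $i(c,c)=0$ rather than at $i(f(c),c)=0$; but the justification you gave for the base case should be replaced by the paper's. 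There is also a small index slip in your recursion (the increment from $f^{1+m}$ to $f^{2+m}$ corresponds to the $(m+2)$-th trip, not the $(m+1)$-th), though this does not affect the final bound.
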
 
\begin{proof}
We have seen from  (\ref{GG})  that (\ref{IN}) is true when $m=0$. Suppose that (\ref{IN}) is true for an integer $m-1$ for $m\geq 1$.
By (\ref{Y4}), at the start of the $(m+1)$-th trip, $x_t$ carries at most $2\ (2k^2+6k+3)^{m-1}$ strands. 
From (\ref{Y3}) and (\ref{Y4}) we see that when the current loop is completed, $x_t$ carries at most 
$$
2\ (2k^2+6k+3)^{m-1}(2k^2+6k+3)=2\ (k^2+6k+3)^m
$$ 
strands. As each strand contributes two points of intersection between $f^{1+m}(c)$ and $c$, it follows that the total number of points of intersection between $f^{1+m}(c)$ and $c$ satisfies
\begin{equation}\label{Y5}
i(f^{1+m}(c),c)\leq i(f^m(c),c)+\mu_{m+1}^+(t_0^2).
\end{equation} 
But from hypothesis we know that 
$$
i(f^{m}(c),c)\leq  4\ \sum_{j=0}^{m-1} \left(2k^2+6k+3\right)^j
$$
and 
$$
\mu_{m+1}^+(t_0^2)\leq 4\ (2k^2+6k+3)^m.
$$
Hence from (\ref{Y5}) we obtain 
$$
i(f^{1+m}(c),c)\leq 4\ \sum_{j=0}^{m} \left(2k^2+6k+3\right)^j.
$$
The assertion then follows. 
\end{proof}
\noindent {\em Proof of Theorem $\ref{T5}$: } 
Write $\beta=2k^2+6k+3$ and $b=f(c)$. By the same argument as in (3.6) and (3.7), we obtain 
$$
\frac{i(f^m(b),c)}{\lambda(f)^m}\leq \frac{2}{\beta-1}\left\{ \beta \left( \frac{\beta}{\lambda(f)}  \right)^m - \frac{1}{\lambda(f)^m}      \right\}
$$
By Theorem \ref{TT} again, we conclude that 
\begin{equation}\label{Y6}
\lambda(f)\leq \beta=2k^2+6k+3=2i_{\gamma}^2+6i_{\gamma}+3.
\end{equation} 

Note that $f_n$ is defined on $S_n$ and that the curve $\gamma$ is on the sphere $S_{n-1}$ with $n-1$ punctures. From (\ref{Y6}) (with $k$ being replaced by $i_{\gamma}=n-3$), we obtain (\ref{Y7}). This completes the proof of Theorem 4.1. \ \ \ \ \ \ \ \ \ \ \ \ \ \ \ \ \ \ \ \ \ \ \ \ \ \ \ \ \ \ \ \ \ \ \ \ \ \ \ \ \ \ \ \ \ \ \ \ \ \ \ \ \ \ \ \ \ \ \ \ \ \ \ \  $\Box$

\medskip

\noindent {\em Proof of Theorem $\ref{T2}$ (in upper bound case): } By Theorem 4.1, we can find an element $f\in \mathscr{F}_0$ such that $\lambda(f)\leq 2n^2-6n+3$. This implies that 
$$
L(\mathscr{F}_0)\leq \mbox{log}\left(2n^2-6n+3\right).
$$ 
Since $\mathscr{F}_0\subset \mathscr{F}$, $L(\mathscr{F})\leq L(\mathscr{F}_0)$. We conclude  that $L(\mathscr{F})\leq \mbox{log}\left( 2n^2-6n+3 \right)$. \ \ \ \ \ \ \ \ \ \ \ \ \ \ \ \ \ \ \ \ \ \ \ \ \ \ \ \ \ \ \ \ \ \  \ $\Box$

\section{Proof of Theorem \ref{T3} and Theorem \ref{T4}}
\setcounter{equation}{0}

\noindent {\em Proof of Theorem $\ref{T3}$: } Let $S_n$ be as in (\ref{SPHERE}) and suppose that $f_{n}$ is a pseudo-Anosov map on $S_n$ that has a minimum dilatation $\lambda(f_{n})$. Further suppose that $f_n$ has the property that $f_{n}(x_i)=x_i$ for some $x_i\in \{x_1,\ldots,x_n\}$, and that $f_{n}$ defines a trivial mapping class on $S\cup \{x_i\}$. This means that $f_{n}(x_j)=x_j$ for all $j=1,2,\ldots, n$.  For simplicity we write $\tilde{S}_i=S\cup \{x_i\}$. Observe that $\tilde{S}_i$ is of type $(0,n-1)$. 

From the discussion in Section 2, we see that $f_{n}$ is defined by a filling closed geodesic $\gamma=\gamma_i$ on $\tilde{S}_i$, and conversely, given a filling closed geodesic $\gamma$ on $\tilde{S}_i$, once a base point $x$ on $\gamma$ is chosen, a pseudo-Anosov map $f_{n}$ can be obtained by pushing the point $x$ along the curve $\gamma$ until it returns to its original location. See also \cite{D} for more details. 
   
By assumption and Theorem 1.1, we have
\begin{equation}\label{KK}
\lambda(f_{n})=L(\mathscr{F})\geq \mbox{log}\left(2n-5\right).
\end{equation}
Now we invoke a result of Hironaka-Kin \cite{H-K}, which states that 
\begin{equation}\label{EEEE}
L(\mbox{Mod}_{S_n}) < \frac{2\ \mbox{log}\left( 2+\sqrt{3}\right)}{n-3}.
\end{equation}
By assumption we have $\lambda(f_n)=L(\mbox{Mod}_{S_n})$. Therefore, (\ref{EEEE}) and (\ref{KK}) combine to yield
$$
7+4\sqrt{3}>\left(2n-5  \right)^{n-3}.
$$
But this inequality holds only for $n\leq 4$. In the case when $n=4$, the example in the introduction shows that $\sigma_2\sigma_1^{-1}$ defines a pseudo-Anosov mapping class on $\mathbf{S}^2\backslash \{x_1,x_2,x_3,x_4\}$ with the minimum dilatation and has the desired property. 

To prove the second statement of Theorem \ref{T3}, we suppose that $f_n^q$ (for some $q\leq n$) is isotopic to the identity on $\tilde{S}_i$ for an $x_i\in \{x_1,\ldots, x_n\}$. Note that $\tilde{S}_i$ is of type $(0,n-1)$. By Theorem \ref{T2} again, we have 
$$
\log \lambda(f_n^q)\geq \mbox{log}\left(2n-5\right).
$$
But by the assumption and the result in \cite{H-K},
\begin{equation}\label{BB}
\log \lambda(f_n)=L(\mbox{Mod}_{S_n})< \frac{2\ \mbox{log}\left(2+\sqrt{3}\right)}{n-3}. 
\end{equation}
We thus obtain 
$$
\mbox{log}\left(2n-5\right)\leq q\ \log \lambda(f_n)<\frac{2q\ \mbox{log}\left(2+\sqrt{3}\right)}{n-3}\leq \frac{2n\ \mbox{log}\left(2+\sqrt{3}\right)}{n-3}.
$$
This is equivalent to 
\begin{equation}\label{BB0}
\left( 2n-5  \right)^{n-3}< \left( 7+4\sqrt{3}  \right)^n.
\end{equation}
But (\ref{BB0}) fails when $n\geq 16$. This completes the proof of Theorem \ref{T3}. \ \ \ \ \ \ \ \ \ \ \ \ \ \ \ $\Box$

\bigskip

\noindent {\em Proof of Theorem $1.2$: } Suppose such an $f_n\in \mathscr{F}$ exists. Then by Theorem 1.1, 
\begin{equation}\label{V1}
L(\mathscr{F})\leq \mbox{log}\left(2n^2-6n+3\right). 
\end{equation}
On the other hand, if $f_n$ is represented by a product of two Dehn twists along two simple closed geodesics $a$ and $b$, then by  \cite{CZ3}, either both $a$ and $b$ are nontrivial on $S_{n-1}$, or both $a$ and $b$ are trivial on $S_{n-1}$. If both $a$ and $b$ are trivial on $S_{n-1}$, then Lemma 4.1 and Lemma 5.1 of \cite{CZ3} implies that 
\begin{equation}\label{U1}
\lambda(f_n)>h_1(4n-10),
\end{equation}
where $h_1(z)=\tfrac{1}{2}\left( z^2-2+z\sqrt{z^2-4}\right)$. If both $a$ and $b$ are nontrivial on $S_{n-1}$, then by Lemma 4.3 and Lemma 5.2 of \cite{CZ3}, 
\begin{equation}\label{U2}
\lambda(f_n)>h(2n-6),
\end{equation}
where $h(z)=\tfrac{1}{2}\left( z^2+2+z\sqrt{z^2+4}\right)$. 

Our first claim is that $h_1(4n-10)>h(2n-6)$. To see this, we first note that for any $z\geq 4$, 
$$
12z^4-36z^2+25>0.
$$
Elementary calculations then show that
\begin{equation}\label{W1}
4\sqrt{1-\frac{1}{z^2}}>1+\sqrt{1+\frac{4}{z^2}}. 
\end{equation}
By setting $z=2n-6$ for $n\geq 4$, it follows from (\ref{W1}) that 
\begin{equation}\label{W2}
4(2n-6)^2+4(2n-6)\sqrt{(2n-6)^2-1}>4+(2n-6)^2+(2n-6)\sqrt{(2n-6)^2+4}.
\end{equation}
Since $2n-6<2n-5$, inequality (\ref{W2}) yields that 
$$
(4n-10)^2+(4n-10)\sqrt{(4n-10)^2-4}>4+(2n-6)^2+(2n-6)\sqrt{(2n-6)^2+4}.
$$
The above inequality implies $h_1(4n-10)>h(2n-6)$. 

Now from (\ref{V1}), (\ref{U1}), and (\ref{U2}), we obtain
$$
2n^2-6n+3>\tfrac{1}{2}\left( 2+(2n-6)^2+(2n-6)\sqrt{4+(2n-6)^2}\right), 
$$
which is equivalent to 
\begin{equation}\label{Z1}
\frac{n^2}{(n-3)^2}-\frac{3n}{(n-3)^2}+\frac{1}{(n-3)^2}>1+\sqrt{\frac{1}{(n-3)^2}+1}.
\end{equation}
But it is easy to check that when $n\geq 7$, 
$$
n^2-3n+1<2(n-3)^2.
$$ 
It turns out that for $n\geq 7$,
\begin{equation}\label{Z2}
\frac{n^2}{(n-3)^2}-\frac{3n}{(n-3)^2}+\frac{1}{(n-3)^2}<2.
\end{equation}
Clearly, (\ref{Z2}) contradicts (\ref{Z1}). This completes the proof of Theorem 1.4. \ \ \ \ \ \ $\Box$

%Let $f\in \mathscr{F}$. If $f\in \mathscr{F}_0$, then by Lemma \ref{L2}, we are done. Otherwise, we can find an element $f_0\in  \mathscr{F}_0$, an integer $m\geq 1$, and an element $g\in G$ such that 
%\begin{equation}\label{JG}
%f=g^*\circ f_0^m\circ (g^*)^{-1}. 
%\end{equation}
%Obviously, $\lambda(f)=\lambda(f_0^m)=\lambda(f_0)^m$. Since $m\geq 1$ and $\lambda(f_0)>1$, 

\section{Appendix}
\setcounter{equation}{0}

In this section, we outline the proof of Theorem \ref{K} by different methods. Write $f=g^*$, where the axis $c_g$ of $g$ projects to a filling closed geodesic $\gamma$ on $S\cup \{x\}$ under the universal covering map $\varrho:\mathbf{H}\rightarrow S\cup \{x\}$.  Suppose that for some integer $m$, we have 
\begin{equation}\label{JH0}
f^m(c)=c
\end{equation}
for a simple closed geodesic on $S$. If $c$ is trivial on $\tilde{S}$, then by Lemma 5.1 of \cite{CZ0}, $g$ would fix a parabolic fixed point of $G$. This is impossible since $g\in G$ is hyperbolic and $G$ is discrete. Assume that $c$ projects to a nontrivial geodesic $\tilde{c}$ on $\tilde{S}$. Then by Lemma 3.2 of \cite{CZ1}, there is a lift $\tau_c$ of the Dehn twist $t_{\tilde{c}}$ such that 
\begin{equation}\label{JH}
\varphi\circ \tau_c\circ \varphi^{-1}=t_c.
\end{equation}
 Now as a quasiconformal homeomorphism of $\mathbf{H}$, $\tau_c$ determines a collection $\mathscr{U}_c$ of maximal half planes in $\mathbf{H}$ so that $\mathbf{H}\backslash \mathscr{U}_c$  is a simply connected, convex region $\mathscr{R}$ with geodesic boundaries so that $\tau_c|_{\mathscr{R}}=\mbox{id}$. From (\ref{JH0}), (\ref{JH}) and Lemma 5.1 of \cite{CZ2}, we conclude that $g$ and hence $g^m$ must send any maximal element of $\mathscr{U}_c$ to a maximal element of $\mathscr{U}_c$.
 
On the other hand, since $\gamma=\varrho(c_g)$ is filling, it intersects $\tilde{c}$. Thus the geodesic $c_g\subset \mathbf{H}$ must intersect some geodesics in $\{\varrho^{-1}(\tilde{c})\}$. Note that all boundaries of elements of $\mathscr{U}_c$ are contained in $\{\varrho^{-1}(\tilde{c})\}$. There remain two cases to consider: (1) $c_g$ is contained in a maximal element $\Delta$ of $\mathscr{U}_c$, or (2) $c_g$ intersects the geodesic $\partial \Delta$ for some $\Delta\in \mathscr{U}_c$. In both cases, it is easy to see that $g^m(\Delta)$ is no longer a maximal element of $\mathscr{U}_c$. This contradiction proves that $\mathscr{F}_{\gamma}\subset \mathscr{F}$. 
 
To prove $\mathscr{F}\subset \bigcup \mathscr{F}_{\gamma}$, we choose an element $f\in \mathscr{F}$. The isotopy $I(\cdot, t)$ between $f$ and the identity gives rise to a Jordan curve $\gamma'$. Let $\gamma$ be the geodesic homotopic to $\gamma'$. If $\gamma$ is not filling, there is a curve $\tilde{c}$ disjoint from $\gamma$. Then one can obtain a geodesic $c\subset S$, homotopic to $\tilde{c}$ if $c$ is viewed as a curve on $S\cup \{x\}$, so that $f(c)=c$. This is impossible. Hence $\gamma$ must fill $S\cup \{x\}$. Finally, by combining the work of Bers \cite{Bers1} and the work of Birman \cite{Bir}, we conclude that  $f\in \mathscr{F}_{\gamma}$. This shows that $\mathscr{F}\subset \bigcup \mathscr{F}_{\gamma}$.


\begin{thebibliography}{99}
%\bibitem[1]{A-B} Ahlfors, L. V., \& Bers, L., {\em Riemann's mapping theorem for variable metrics.} Ann. of Math. (2) 72 (1960), 385-404.
%\bibitem[2]{Bau} Bauer, M., {\em Examples of pseudo-Anosov homeomorphisms. } Trans. AMS, V. 330, No. 1 (1992), 333-370
%\bibitem[3]{Beardon} Beardon, A., {\em The geometry of Discrete groups. } Springer-Verlag, NY Heidelberg Berlin, 1983.
\bibitem[1]{A-B} Ahlfors, L. V., \& Bers, L., {\em Riemann's mapping theorem for variable metrics.} Ann. of Math. (2) 72 (1960), 385--404.
\bibitem[2]{Bers1} Bers, L., {\em Fiber spaces over Teichm\"{u}ller spaces. } Acta Math. 130 (1973), 89--126.



%\bibitem[3]{Bers2} Bers, L., {\em An extremal problem for quasiconformal mappings and a theorem by Thurston.} Acta Math. 141 (1978), 73-98.
\bibitem[3]{Bers3} Bers, L., {\em Uniformization, moduli and Kleinian groups.} Bull. London Math soc. 4 (1972), 257--300.
%\bibitem[3]{Bers4} Bers, L., {\em A remark on Mumford's compactness theorem.} Isreal J. Math., 12(1972),400-407.
\bibitem[4]{Bir} Birman, J.S., {\em Braids, Links and Mapping class groups. } Ann of Math. Studies, No. 82, Princeton University Press, (1974).
%\bibitem[5]{E-K} Earle, C. J., \& Kra, I., {\em On holomorphic mappings between Teichm\"{u}ller spaces.} in Contributions to Analysis, Edited by L.V.Ahlfors, et.al., Academic Press, New York, 1974, 107-124.
\bibitem[5]{D} Dowdall, S., {\em Dilatation versus self-intersection number for point-pushing pseudo-Anosov homeomorphisms. } Preprint, (2010).
\bibitem[6]{FLP} Fathi, A., Laudenbach, F., Poenaru, V., {\em Travaux de Thurston sur les surfaces, Seminaire Orsay, } Asterisque, Vol. 66-67, Soc. Math. de France, (1979).
\bibitem[7]{Iva} Ivanov, N. V., {\em Coefficients of expansion of pseudo-Anosov homeomorphisms, } Zap. Nauchn. Sem. Leningrad. Otdel. Mat. Inst. Steklov. (LOMI), 167 (Issled. Topol. 6) (1988), 111--116. 
%\bibitem[3]{Bers4} Bers, L., {\em A remark on Mumford's compactness theorem.} Isreal J. Math., 12(1972),400-407.
\bibitem[8]{F-M} Farb, B, \& Margalit, D., {\em A primer on mapping class groups.} To appear in Princeton Mathematical Series. Princeton Univ. Press, (2009). 
%\bibitem[6]{B-G} Bers, L., \& Greenberg, L., {\em Isomorphisms between Teichm\"{u}ller spaces.} Adv. in the theory of Riemann surfaces, Ann. of Math. Studies, 66 (1971), 53-79.
%\bibitem[6]{FLM} Farb, B., Leininger, C., \& Margalit D., {\em The lower central series and pseudo-Anosov dilatations. } Manuscript, 2006. 
\bibitem[9]{H} Handel, M.,{\em The forcing partial order on the three times punctured disk.} Ergodic Theory Dynam. Systems, 17 (3) (1997), 593--610
\bibitem[10] {H-S} Ham J. Y. \&  and Song, W. T.,  {\em The Minimum dilatation of pseudo-Anosov 5-braids.} Preprint, (2006).
\bibitem[11]{H-K} Hironaka, H. \& Kin, E., {\em A family of pseudo-Anosov braids with small dilatation.} Manuscript, (2006).
\bibitem[12] {H-L} Hubert, P. \& Lanneau, E., {\em  Veech group without parabolic elements.} Duke Math. J. 133 409 (2006), 335–-346.
%\bibitem[7] {H-L-M}  \& M\"{o}ller, M., {\em The Arnoux-Yoccoz Teichm\"{u}ller disk. } Manuscript, 2008. 
%\bibitem[9]{F-K} Farkas, H. M.,  \& Kra, I., {\em Riemann surfaces. } Springer-Verlag, New York and Berlin, 1980.
%\bibitem[8]{Gar} Gardiner, F., {\em Teichm\"{u}ller theory and Quadratic Differentials.} Wiley-Interscience, New York, (1987).
\bibitem[13]{Kr} Kra, I., {\em On the Nielsen-Thurston-Bers type of some self-maps of Riemann surfaces.} Acta Math. 146 (1981), 231--270.
%\bibitem[5]{MUM} Mumford, D.,  {\em A remark on Mahler's compactness theorem\  }Proc. Amer. Math. Soc., 28 (1971), 289-294.
\bibitem[14]{Lei} Leininger, C. J., {\em On groups generated by two positive multi-twists: Teichm\"{u}ller curves and Lehmer's number. } Geometry and topology,  Vol. 8 (2004) 1301-1359. 
%\bibitem[11]{M-M} Masur, H., \& Minsky, Y., {\em Geometry of the complex of curves I: Hyperbolicity.} Invent.Math 138 (1999) 103-149.
%\bibitem[9]{Mat} Matelski, J. P.,  {\em A compactness theorem for Fuchsian groups of the second kind. }Duke Math. J., 43(1976), 829-840.
%\bibitem[10]{MUM} Mumford, D.,  {\em A remark on Mahler's compactness theorem\  }Proc. Amer. Math. Soc., 28 (1971), 289-294.

%\bibitem[11]{Leh} Lehto, O., {\em Univalent functions and Teichmüller spaces.} Graduate Texts in Mathematics, vol. 109, Springer-Verlag, New York, Berlin, Heidelberg, London, Paris, New York, 1987.
%\bibitem[10]{Lo} Long, D. D., {\em Constructing pseudo-Anosov maps. } In {\em Knot theory and manifolds}, Lecture notes in math, No. 1144 (1985), 108-114. 
\bibitem[15]{M} Matsuoka, T., {\em Braids of periodic points and a $2$-dimensional analogue of Sharkorovskii's ordering.} Dynamical systems and nonlinear oscillation phenomena (Japanese) Kyoto, (1985).  
%\bibitem[11]{Mo} Moser, L., {\em The classification of pseudo-Anosovs.} In {\em Low dimensional topology and Kleinian groups,} London Math. Soc. Lecture Notes Ser., V 112, Cambridge Univ. Press, 1986, 13-75. 
%\bibitem[11]{Na} Nag, S., {\em Non-geodesic discs embedded in Teichm\"{u}ller spaces.} Amer. J. Math. 104 (1982) 339-408.
\bibitem[16]{Pen} Penner, R. C., {\em Bounds on least dilatations.} Proc. AMS, 113(2), (1991), 443--450.
%\bibitem[13]{Sch} Schmutz, P., {\em Riemann surfaces with shortest geodesic of maximal length.} GAFA 3 (1993), 564--631.
%\bibitem[12]{Zh} Zhang, C., {\em On the Bers fiber spaces. } Ann. Acad. Sci. Fenn. 24,(1999), 353-396.
%\bibitem[13]{Zh1} Zhang, C., {\em Parabolic transformations of Teicum\"{u}ller spaces. } Preprint
%\bibitem[6]{Wol} Wolpert, S., {\em The length spectrum of a compact Riemann surface.} Ann. of Math.
%\bibitem[16]{Pen2} Penner, R. C., {\em Probing mapping class group using arcs.} Manuscript, 2005.
\bibitem[17]{Th} Thurston, W. P., {\em On the geometry and dynamics of diffeomorphisms of surfaces.} Bull. Amer. Math. Soc. (N.S.) 19 (1988), 417--431.
%\bibitem[15]{Tsai} Tsai, C., {\em The asymptotic behavior of least pseudo-Anosov dilatation.} Preprint, 2010.
%\bibitem[16] {W-W-Z} Wang, S., Wu, Y, \& Zhou, Q., {\em Pseudo-Anosov maps and simple closed curves on surfaces.} Math. Proc. Camb. Phil. Soc. Vol 128 (2000), 321--326.
%\bibitem[13]{Zh1} Zhang C., {\em Modular transformations of Bers fiber spaces.} Preprint, 2004.
%\bibitem[9]{Zh1} Zhang C., {\em On isomorphisms of Bers fiber spaces.} Ann Acad. Sci. Fen. Mathematica, Vol 22, 1997, 255-274.
%\bibitem[13]{Zh2} Zhang C., {\em Modular transformations of Bers fiber spaces.} Complex variables, to appear.
%\bibitem[13]{Ve} Veech, W. A., {\em Teichm\"{u}ller curves in moduli space, Einstein series and an application to triangular billiards. } Invent. Math. 97 (1989) 553-583. 
%\bibitem[14]{Ve1} Veech, W. A., {\em Gauss measures for transformations on the space of interval exchange maps. } Ann. Math. 115 (1982) 201-242. 
\bibitem[18]{CZ0} Zhang C., {\em Singularities of quadratic differentials and extremal Teichm\"{u}ller mappings defined by Dehn twists.} J. Aust. Math. Soc., Vol 3, (2009), 275--288.
\bibitem[19]{CZ1} $\underline{\ \ \ \ \ \ \ \ \ }$, {\em Pseudo-Anosov maps and fixed points of boundary homeomorphisms compatible with a Fuchsian group.} Osaka J. Math, Vol 46, (2009), 783--798. 
%\bibitem[14]{CZ2} $\underline{\ \ \ \ \ \ \ }$,  {\em Mapping classes of punctured Riemann surfaces determined by pairs of simple closed geodesics.} Preprint, 2009.
%\bibitem[17]{CZ3} $\underline{\ \ \ \ \ \ \ }$,  {\em On products of essential hyperbolic element of Fuchsian groups.} Preprint, 2009.
\bibitem[20]{CZ2} $\underline{\ \ \ \ \ \ \ \ \ }$, {\em On Products of Pseudo-Anosov maps and Dehn twists of Riemann surfaces with punctures.} J. Aust. Math. Soc., Vol 88 (2010), 413--428. 
%\bibitem[19]{CZ3} $\underline{\ \ \ \ \ \ \ \ \ }$, {\em Twice punctured disks and their associated pseudo-Anosov maps on Riemann surfaces.} Preprint, 2010. 
\bibitem[21]{CZ3} $\underline{\ \ \ \ \ \ \ \ \ }$, {\em A lower bound for dilatations of certain class of pseudo-Anosov maps of Riemann surfaces.} J. Math. Sci. Univ. Tokyo 16 (2009), 441--460.
\end{thebibliography}
\end{document}